
\documentclass[a4paper]{amsart}

\usepackage[british,english]{babel} 
\usepackage{graphics,color,pgf}
\usepackage{epsfig}
\usepackage[ansinew]{inputenc}
\usepackage[all]{xy}
\usepackage{hyperref}
\usepackage{amssymb, amsmath,amsthm, mathtools, amscd}
\usepackage{mathrsfs}
\usepackage{stmaryrd}
\usepackage{cancel} 
\usepackage{tikz}
\usetikzlibrary{matrix}

\theoremstyle{plain}
 \newtheorem{thm}{Theorem}
 \newtheorem{prop}[thm]{Proposition}
 \newtheorem{lem}[thm]{Lemma}
 \newtheorem{cor}[thm]{Corollary}
 \newtheorem*{op*}{Open Problem} 
 \newtheorem*{frthmA*}{Theorem A} 
 \newtheorem*{frcorB*}{Corollary B} 
  \newtheorem*{frthmAp*}{Theorem A'} 
   
\theoremstyle{definition}

\theoremstyle{remark}
 \newtheorem*{rem}{Remark}

\newcommand{\R}{\mathbb{R}}




\renewcommand{\geq}{\geqslant}
\renewcommand{\setminus}{\smallsetminus}
\setlength{\textwidth}{28cc} \setlength{\textheight}{50cc}

\title[Kernels in measurable cohomology]{Kernels in measurable cohomology for transitive actions}

\author[Michelle Bucher]{
Michelle Bucher} 
\address{Universit\'e de Gen\`eve}
\email{Michelle.Bucher-Karlsson@unige.ch}

\author[Alessio Savini]{
Alessio Savini} 
\address{University of Milano-Bicocca
}
\email{alessio.savini@unimib.it}

\thanks{Supported by the Swiss National Science Foundation} 

\begin{document}

\begin{abstract}
Given a connected semisimple Lie group $G$, Monod \cite{Monod} has recently proved that the measurable cohomology of the $G$-action $H^*_m(G \curvearrowright G/P)$ on the Furstenberg boundary $G/P$, where $P$ is a minimal parabolic subgroup, maps surjectively on the measurable cohomology of $G$ through the evaluation on a fixed basepoint. Additionally, the kernel of this map depends entirely on the invariant cohomology of a maximal split torus. In this paper we show a similar result for a fixed subgroup $L<P$ such that the stabilizer of almost every pair of points in $G/L$ is compact. More precisely, we show that the cohomology of the $G$-action $H^p_m(G \curvearrowright G/L)$ maps surjectively onto $H^p_m(G)$ with a kernel isomorphic to $H^{p-1}_m(L)$. Examples of such groups are given either by any term of the derived series of the unipotent radical $N$ of $P$ or by a maximal split torus $A$. We conclude the paper by computing explicitly some cocycles on quotients of $\mathrm{SL}(2,\mathbb{K})$ for $\mathbb{K}=\mathbb{R}, \mathbb{C}$. 
\end{abstract}

\maketitle

\section{Introduction}

Given a semisimple Lie group $G$, its measurable cohomology $H^*_m(G)$ is defined via the cocomplex of $G$-invariant (classes of) measurable functions $(L^0(G^{*+1})^G,d^*)$, where $d^*$ is the usual homogeneous coboundary operator. Similarly, its continuous cohomology is defined by looking at the cocomplex of $G$-invariant continuous functions $(C_c(G^{\ast+1})^G,d^\ast)$. Recently, Austin and Moore \cite{AuMoo} have proved that the natural inclusion of continuous cochains into measurable ones induces an isomorphism in cohomology. The result actually holds in a far more general setting, namely for any $G$ locally compact second countable group and for every Fr\'echet $G$-module as coefficients. 

The cohomology $H^\ast_m(G)$ can be viewed as the measurable cohomology of the action of $G$ on itself. This point of view allows to extend the definition of measurable cohomology to any homogeneous $G$-space, endowed with its invariant measure class. For a closed subgroup $L<G$, it is sufficient to define $H^*_m(G \curvearrowright G/L)$ via the cocomplex of $G$-invariant measurable functions $(L^0((G/L)^{*+1})^G,d^*)$. There exists a natural evaluation map 
\begin{equation} \label{eq evaluation map}
\mathrm{ev}:H^*_m(G \curvearrowright G/L) \rightarrow H^*_m(G)
\end{equation}
obtained by fixing any basepoint $x \in G/L$ and considering the cochain map
$$
\mathrm{ev}_x:L^0((G/L)^{*+1})^G \rightarrow L^0(G^{*+1})^G, \ \ (\mathrm{ev}_x)(f)(g_0,\ldots,g_*):=f(g_0x,\ldots,g_*x) . 
$$
In the attempt to give a more geometric flavour to the classes lying in $H^*_m(G)$, one could ask for which closed subgroup $L<G$ the evaluation map is an isomorphism or, at least, it is surjective. 

In a recent paper Monod \cite{Monod} focused his attention on the case of the Furstenberg-Poisson boundary $G/P$, where $P<G$ is any miminal parabolic subgroup. He proved that the evaluation is indeed surjective, but surprisingly it is not an isomorphism, unless the rank of $G$ is equal to one. In fact, in the higher rank case a non-trivial kernel appears when the cohomological degree is sufficiently small and such defect can be completely characterized in terms of the cohomology of a maximal split torus $A<P$. More precisely, let us denote by $\mathfrak{a}$ the Lie algebra associated to $A$ and by $w_0$ the longest element in the Weyl group, which acts on $\mathfrak{a}$ via the adjoint representation. We have the following 

\begin{thm}\cite[Theorem B]{Monod}\label{Thm Monod} Let $G$ be a connected semisimple Lie group with finite center. The evaluation map 
$$ H^*_m(G\curvearrowright G/P)\longrightarrow H^*_m(G)$$
is surjective and its kernel 
$$NH^*_m(G\curvearrowright G/P):= \mathrm{Ker}(H^*_m(G\curvearrowright G/P)\longrightarrow H^*_m(G))$$
fits into a short exact sequence
\begin{equation}\label{SESP}(\wedge^{k-2}\mathfrak{a}^*)^{w_0} \overset{i}\longrightarrow NH^k_m(G\curvearrowright G/P) \longrightarrow  (\wedge^{k-1}\mathfrak{a}^*)^{w_0},\end{equation}
for $k \geq 3$, and, for $k=2$, there is an isomorphism 
$$
NH^2_m(G \curvearrowright G/P) \cong (\mathfrak{a}^\ast)^{w_0}.
$$
\end{thm}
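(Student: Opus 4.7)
The strategy is to compare the cochain complex computing $H^*_m(G\curvearrowright G/P)$ with the one computing $H^*_m(G)$ via a double-complex / spectral-sequence argument, and to identify the residual contribution to the kernel with the invariant cohomology of the maximal split torus $A$.

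First I would simplify the invariant cochains on $(G/P)^{n+1}$ using the Bruhat structure. Fixing $x_0=eP$ gives the standard identification $L^0((G/P)^{n+1})^G \cong L^0((G/P)^n)^P$. A second normalization, valid for $n\geq 1$, exploits the transitivity of the $P$-action on the open Schubert cell in $G/P$ with stabilizer of $w_0 P$ equal to $P\cap w_0 P w_0^{-1}=MA$, and yields
\begin{equation*}
L^0((G/P)^{n+1})^G \,\cong\, L^0((G/P)^{n-1})^{MA}.
\end{equation*}
This degree shift by two already foreshadows the exponents $k-2$ and $k-1$ appearing in the statement.

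Next I would organize the comparison in a double complex such as $C^{p,q}=L^0(G^{p+1}\times (G/P)^{q+1})^G$. One of its two spectral sequences degenerates using the freeness of the $G$-action on $G^{p+1}$ and abuts to $H^*_m(G)$; the other uses the exactness of the bar resolution $L^0((G/P)^{*+1})\to \R$ and abuts to $H^*_m(G\curvearrowright G/P)$. The evaluation map appears as an edge homomorphism, so surjectivity will follow from the collapse. The kernel then sits in a filtration piece which, after applying Step~1, is governed by the cohomology of $MA$ acting on the normalized coordinates. Since $M$ is compact and $A\cong\R^r$, the Austin--Moore theorem combined with van Est yields $H^*_m(MA)\cong \wedge^*\mathfrak{a}^*$. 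The symmetry $w_0$ enters as the involution swapping the two normalized coordinates $eP\leftrightarrow w_0 P$, so only $w_0$-invariant classes survive; this produces the two pieces $(\wedge^{k-2}\mathfrak{a}^*)^{w_0}$ and $(\wedge^{k-1}\mathfrak{a}^*)^{w_0}$, and the remaining spectral-sequence differential between them assembles into the short exact sequence \eqref{SESP}. The low-degree collapse produces the isomorphism for $k=2$.

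The main obstacle will be tracking the differentials and the $w_0$-action precisely through the normalizations and the spectral sequence. One has to verify that the abstract edge maps really identify with the inclusion $i$ of $(\wedge^{k-2}\mathfrak{a}^*)^{w_0}$ and a surjection onto $(\wedge^{k-1}\mathfrak{a}^*)^{w_0}$, that the involution induced by $w_0$ on $MA$ really implements the adjoint action on $\mathfrak{a}^*$, and that the low-degree case $k=2$ receives the correct separate treatment (in which only the lower piece survives).
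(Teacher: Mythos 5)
This statement is quoted from \cite[Theorem B]{Monod} and is not proved in the paper; the only internal point of comparison is the proof of Theorem \ref{Main thm} in Section \ref{sec main thm}, which runs the same spectral-sequence machine in the easier situation where stabilizers of generic pairs are compact. Your outline does reproduce the architecture of Monod's argument (and of Section \ref{sec main thm}): a bicomplex of invariant cochains on $G^{p+1}\times (G/P)^{q}$, two filtrations, Eckmann--Shapiro to convert the columns into cohomology of stabilizers of generic tuples, the Bruhat fact that the stabilizer of an opposite pair is $P\cap w_0Pw_0^{-1}=MA$ with $H^*_m(MA)\cong\wedge^*\mathfrak{a}^*$, and the $w_0$-symmetry coming from the ordering of the normalized pair. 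So the approach is the right one.

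Two caveats. First, you have interchanged the mechanisms of the two filtrations: the spectral sequence that degenerates and returns $H^*_m(G)$ is the one obtained by taking cohomology in the $G/P$-direction first, and what makes it collapse is precisely the exactness of the augmented complex $\R\to L^0(G/P)\to L^0((G/P)^2)\to\cdots$ together with the exactness of $L^0(G^{p+1},\,\cdot\,)$ (this is the content of Proposition \ref{prop collapse} here), not any freeness of the $G$-action on $G^{p+1}$; the filtration that does \emph{not} degenerate, whose bottom row is $L^0((G/P)^{q})^G$ and hence produces $H^*_m(G\curvearrowright G/P)$, is the one whose columns are $H^p_m(G,L^0((G/P)^{q}))$. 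Second, the outline stops where the actual work begins: one must identify $d_1\colon H^p_m(P)\to H^p_m(MA)$ with $1-w_0^*$ on $\wedge^p\mathfrak{a}^*$ (whence $w_0$-invariants in the kernel and coinvariants, isomorphic to invariants, in the cokernel), prove that the columns $q\geq 3$ vanish because stabilizers of generic triples are compact, check that the restriction $H^p_m(G)\to H^p_m(P)$ vanishes for $p>1$, and track the surviving differentials to obtain the two-step filtration \eqref{SESP} and the single surviving term for $k=2$. As a plan this is faithful to Monod's proof; as a proof it defers all of these identifications, which are exactly the nontrivial steps.
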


More recently the authors \cite{BucSavAlt} have shown that the short exact sequence \eqref{SESP} corresponds to the decomposition into non-alternating and alternating cochains on the boundary.

Monod's proof does not rely on the usual characterization of group cohomology in terms of resolutions, but it exploits the Eckmann-Shapiro isomorphism \cite[Theorem 6]{Moore} together with spectral sequences. A similar approach already appeared in a work by Bloch \cite{Bloch} to study the measurable cohomology of $\mathrm{SL}(2,\mathbb{C})$ and it was later developed by Pieters \cite{Pieters} in the case of $\mathrm{Isom}^+(\mathbb{H}^n)$. 

The advantage of Monod's proof is that one can follow his spectral sequence to obtain explicit representatives of the cohomology classes lying in the kernel of the evaluation map. Following the latter strategy, the authors \cite{BucSav} obtained explicit cocycles in low degrees for either the products of isometry groups of hyperbolic spaces or $\mathrm{SL}(3,\mathbb{K})$, where $\mathbb{K}=\mathbb{R},\mathbb{C}$. In fact, as was pointed out to us by Elisha Falbel, our $\mathrm{SL}(3,\mathbb{K})$-cocycles on the flag space are already cohomologically trivial when viewed as cocycles on the space of affine flags. Since affine flags surject onto flags, the previous remark shows in a different way that our cocycles must lie in the kernel of the evaluation map for $G/P$. This observation motivated the present study of different homogeneous spaces for which the evaluation map \eqref{eq evaluation map} is still surjective but some non-trivial kernel appears in low degrees. For instance, Goncharov \cite{Goncharov} exhibited an unbounded exotic cocycle lying in the kernel of the $\mathrm{SL}(3,\mathbb{C})$-action on $\mathbb{P}^2(\mathbb{C})$. The latter corresponds to the choice of the homogeneous space $\mathrm{SL}(3,\mathbb{C})/Q$, where $Q$ is a maximal parabolic subgroup. In this paper we will focus our attention on spaces of the form $G/L$, for some particular $L$ contained in a minimal parabolic subgroup.

\begin{thm}\label{Main thm} Let $G$ be a connected semisimple Lie group with finite center. Let $P<G$ be a minimal parabolic subgroup. Consider $L<P$ a closed subgroup such that the stabilizer of almost every pair of points in $G/L$ is compact. Then the evaluation map 
$$
H^*_m(G \curvearrowright G/L) \longrightarrow H^*_m(G)
$$
is surjective and its kernel 
$$
NH^*_m(G \curvearrowright G/L):=\mathrm{Ker}(H^*_m(G \curvearrowright G/L) \longrightarrow H^*_m(G))
$$
is isomorphic to 
$$
NH^p_m(G \curvearrowright G/L) \cong H^{p-1}_m(L),
$$
for $p \geq 2$. 

Moreover this construction is compatible with inclusions: more precisely, given $L_0<L_1<P$ two closed subgroups as above, the projection $G/L_0 \rightarrow G/L_1$ induces a commutative diagram 
\begin{equation}\label{diagram restriction identity}
\xymatrix{
H^{p-1}_m(L_1) \ar[rr] \ar[d]^{\mathrm{res}} && H^p_m(G \curvearrowright G/L_1) \ar[rr] \ar[d] && H^p_m(G) \ar[d]^{=} \\
H^{p-1}_m(L_0) \ar[rr] && H^p_m(G \curvearrowright G/L_0) \ar[rr] && H^p_m(G) \ .
}
\end{equation}
where $\mathrm{res}$ is the restriction map. 
\end{thm}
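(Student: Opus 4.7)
The plan is to derive the theorem from Monod's Theorem~B combined with a double-complex spectral sequence argument that exploits the compact-pair-stabilizer hypothesis.

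First I would establish surjectivity of $\mathrm{ev}_L$ by reduction to Monod. The $G$-equivariant projection $\pi \colon G/L \to G/P$ induces a pullback $\pi^* \colon H^*_m(G \curvearrowright G/P) \to H^*_m(G \curvearrowright G/L)$ and satisfies $\mathrm{ev}_L \circ \pi^* = \mathrm{ev}_P$ by a direct computation with compatible basepoints. Since Monod's Theorem~B asserts that $\mathrm{ev}_P$ is surjective, so is $\mathrm{ev}_L$.

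Next, following Monod's strategy, I would set up the double complex $C^{p,q} := L^0(G^{p+1} \times (G/L)^{q+1})^G$ with the $G$-bar coboundary in the first group of variables and the $(G/L)$-cosimplicial coboundary in the second. The column-first spectral sequence should converge to $H^*_m(G)$: the standard contracting homotopy on $L^0((G/L)^{*+1})$ combined with the $\mathbb{R}$-linear identification $L^0(G^{p+1}, V)^G \cong L^0(G^p, V)$ trivialises the vertical direction, so the columns are acyclic in positive degree. The row-first spectral sequence has $E_1^{p,q} = H^p_m(G, L^0((G/L)^{q+1}))$; Shapiro's lemma gives $H^p_m(L)$ on the bottom row ($q = 0$), while for $q \geq 1$ the pair-stabilizer hypothesis (applied orbit by orbit to the decomposition of $L^0((G/L)^{q+1})$ into pieces of the form $L^0(G/K_\xi)$ with $K_\xi$ compact) forces $E_1^{p, q} = 0$ for $p \geq 1$, leaving $L^0((G/L)^{q+1})^G$ in the left column. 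Passing to $E_2$ I therefore get $E_2^{p, 0} = H^p_m(L)$ on the bottom row, $E_2^{0, q} = H^q_m(G \curvearrowright G/L)$ on the left column, and zeros elsewhere.

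Comparing the two abutments yields a Gysin-type long exact sequence
\[
\cdots \to H^{n-1}_m(G) \xrightarrow{\mathrm{res}} H^{n-1}_m(L) \xrightarrow{\delta} H^n_m(G \curvearrowright G/L) \xrightarrow{\mathrm{ev}_L} H^n_m(G) \xrightarrow{\mathrm{res}} H^n_m(L) \to \cdots
\]
in which the edge maps are, by inspection, the standard restriction and the evaluation respectively. Surjectivity of $\mathrm{ev}_L$ (from the first step) forces $\mathrm{res}$ to vanish in positive degree, and the sequence collapses to $0 \to H^{n-1}_m(L) \to H^n_m(G \curvearrowright G/L) \to H^n_m(G) \to 0$ for $n \geq 2$, giving the asserted kernel identification. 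The compatibility in $L$ is formal: the $G$-equivariant projection $G/L_0 \to G/L_1$ induces a morphism of double complexes which acts as the restriction $H^{p-1}_m(L_1) \to H^{p-1}_m(L_0)$ on the bottom row of $E_2$, so passing to the long exact sequences recovers the stated commutative diagram.

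The main obstacle I expect is the spectral sequence bookkeeping: cleanly justifying the column acyclicity despite the non-$G$-equivariance of the contracting homotopy, making the orbitwise Shapiro argument rigorous in measurable cohomology via the Austin--Moore identification, and checking that the edge maps of the row-first spectral sequence genuinely induce the restriction and evaluation maps defined in the introduction.
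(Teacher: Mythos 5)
Your proposal is correct and follows the same architecture as the paper's proof: the same bicomplex of $G$-invariant measurable functions on $G^{p+1}\times(G/L)^{q}$ (yours is the un-augmented variant, so one of the two spectral sequences abuts to $H^\ast_m(G)$ rather than to $0$ and the conclusion is packaged as a Gysin-type long exact sequence; this is only a cosmetic reindexing), Eckmann--Shapiro identifying the $q=1$ column with $H^\ast_m(L)$, and the compact-pair-stabilizer hypothesis killing the columns $q\geq 2$ exactly as in Proposition \ref{vanishing columns} (note that this vanishing is needed, and holds, already for $p=1$, not only for $p>1$ as that proposition is stated). The one genuine divergence is how you dispose of the map $H^p_m(G)\to H^p_m(L)$: you import surjectivity of the evaluation from Monod's Theorem \ref{Thm Monod} via pullback along $G/L\to G/P$ and then read off $\mathrm{res}=0$ from exactness of your long exact sequence, whereas the paper proves directly (Lemma \ref{vanishing differential}) that this restriction factors through $H^p_m(P)$, which vanishes for $p>1$ by Wienhard/Monod, and then obtains surjectivity of the evaluation as an \emph{output} of the degeneration rather than as an input. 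Your route is valid but invokes a strictly stronger black box (all of Theorem \ref{Thm Monod} instead of only the vanishing of $\mathrm{res}_P$), and it additionally leans on identifying the edge homomorphism of your Gysin sequence with the evaluation map of the introduction --- a verification you rightly flag as the delicate point, and which is the analogue of the explicit diagram \eqref{diagram restriction} that the paper writes down on the restriction side. The remaining items (well-definedness of $\pi^\ast$ on $L^0$ via equivalence of pushed-forward quasi-invariant measure classes, the measurable orbit decomposition requiring smoothness of the $G$-action on $(G/L)^q$, and the formal comparison of bicomplexes for the compatibility statement) are handled at the same level of detail as in the paper.
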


The strategy of the proof of Theorem \ref{Main thm} is analogous to the one given by Monod. In fact we construct a spectral sequence abutting to zero and whose first page contains the cohomology of $G$ and $L$ on the first and the second columns, respectively, and the cocomplex of measurable $G$-invariant functions on $G/L$ on the first row. The explicit computation of the differentials and the convergence to zero lead to the desired result. The stability statement is obtained by a simple comparison argument between the spectral sequences obtained by choosing different groups $L_0$ and $L_1$. 

Applying Theorem \ref{Main thm} to any element of the derived series associated to the unipotent radical $N<P$, we obtain:  

\begin{cor}\label{Thm N}
Let $G$ be a connected semisimple Lie group and let $N<P$ be the unipotent radical of a minimal parabolic subgroup $P$. We define inductively $N_{k+1}:=[N_k,N_k]$ as the commutator subgroup of $N_k$. 
Then the stabilizer of almost every pair in $G/N_k$ is compact. As a consequence, we have a short exact sequence
\begin{equation}\label{SES N}
\xymatrix{
0 \ar[r] & H^{p-1}_m(N_k) \ar[r]^{\hspace{-15pt} i} & H^p_m(G \curvearrowright G/N_k) \ar[r] & H^p_m(G) \ar[r] & 0,
}
\end{equation}
for $p \geq 2$. 
\end{cor}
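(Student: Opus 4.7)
The plan is to verify the compact-stabilizer hypothesis of Theorem~\ref{Main thm} for $L = N_k$, after which the short exact sequence is immediate. First I would rewrite the condition: the stabilizer in $G$ of a pair $(g_1 N_k, g_2 N_k) \in (G/N_k)^2$ is conjugate to $N_k \cap g N_k g^{-1}$ with $g = g_1^{-1} g_2$, so it is enough to show this intersection is compact for almost every $g \in G$. Since $N_k \subset N$, the inclusion $N_k \cap g N_k g^{-1} \subset N \cap g N g^{-1}$ further reduces the task to proving $N \cap g N g^{-1} = \{e\}$ for a.e.\ $g$.

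The main input is the Bruhat decomposition: the big cell $Pw_0 P$ is open and of full Haar measure in $G$. For $g = p_1 w_0 p_2$ in this cell, the normality of $N$ in $P$ together with the identity $w_0 N w_0^{-1} = N^-$ yield
$$g N g^{-1} = p_1 w_0 (p_2 N p_2^{-1}) w_0^{-1} p_1^{-1} = p_1 N^- p_1^{-1}.$$
Any element $x \in N \cap p_1 N^- p_1^{-1}$ then satisfies both $p_1^{-1} x p_1 \in p_1^{-1} N p_1 = N$ (by normality again) and $p_1^{-1} x p_1 \in N^-$, hence $p_1^{-1} x p_1 \in N \cap N^- = \{e\}$, i.e.\ $x = e$. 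Thus $N \cap g N g^{-1} = \{e\}$ on the big cell, and a fortiori $N_k \cap g N_k g^{-1} = \{e\}$, which is trivially compact.

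At this point I would apply Theorem~\ref{Main thm} to $L = N_k$, which is a closed subgroup of $P$ since the derived subgroups of the simply connected nilpotent group $N$ are closed connected subgroups. The theorem yields surjectivity of the evaluation map and identifies its kernel with $H^{p-1}_m(N_k)$, which is precisely the desired short exact sequence. I do not foresee a serious obstacle: the whole content lies in the short Bruhat computation, and the normality trick sidesteps any Lie-algebra transversality issues that would arise from trying to work directly with the decomposition $G = N^- \cdot P$ (where $g N g^{-1} = n^- N (n^-)^{-1}$ is not obviously transverse to $N$).
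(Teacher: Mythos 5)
Your argument is correct: the identification of the stabilizer of a pair with a conjugate of $N_k\cap gN_kg^{-1}$, the reduction to $N\cap gNg^{-1}$, and the computation $gNg^{-1}=p_1N^-p_1^{-1}$ on the big cell via normality of $N$ in $P$ and $w_0Nw_0^{-1}=N^-$, followed by $N\cap N^-=\{e\}$, all hold, and $N_k$ is indeed closed in $G$. The paper reaches the same conclusion by a parallel but not identical route: it projects the pair to $G/P$, calls a pair generic when its image is a pair of opposite parabolics (the same full-measure condition as membership of $g$ in the big cell $Pw_0P$), and then squeezes the stabilizer $H$ between $N$ (since $H\subset N_k$) and $nMAn^{-1}$ (the stabilizer of the opposite pair $(P,nw_0P)$), concluding that $H\subset N\cap nMAn^{-1}$ is compact. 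Your version stays inside $G/N$ and yields the stronger statement that the generic stabilizer is actually trivial, whereas the paper's version leans on the structure of stabilizers of configurations in $G/P$, which is the template it then adapts to the torus case $G/A$ --- where your normality trick is unavailable since $A$ is not normal in $P$. One point you could make explicit: the exceptional set $G\setminus Pw_0P$ is bi-$P$-invariant, hence descends to a null set of pairs in $(G/N_k)^2$, which is what converts ``for a.e.\ $g\in G$'' into ``for a.e.\ pair''; the paper handles this by defining genericity directly on $(G/N_k)^2$ as the preimage of the opposite pairs.
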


For instance, when $G=\mathrm{SL}(3,\mathbb{R})$ we can choose as $P$ the subgroup of upper triangular matrices and its unipotent radical $N=\mathcal{H}_3(\mathbb{R})$ is the (real) Heisenberg group. Corollary \ref{Thm N} implies that the cohomology of $\mathcal{H}_3(\mathbb{R})$ injects in the cohomology of the $\mathrm{SL}(3,\mathbb{R})$-action on the affine flags (and the statement extends to every $n \geq 4$). 

Choosing as $L=A<P$ a maximal split torus, we get: 

\begin{cor}\label{Thm A}
Let $G$ be a connected semisimple Lie group and let $A<P$ a maximal split torus. Then the stabilizer of almost every pair in $G/A$ is compact. As a consequence, we have a short exact sequence
\begin{equation}\label{SES A}
\xymatrix{
0 \ar[r] & H^{p-1}_m(A) \ar[r]^{\hspace{-15pt} i} & H^p_m(G \curvearrowright G/A) \ar[r] & H^p_m(G) \ar[r] & 0,
}
\end{equation}
for $p \geq 2$. 
\end{cor}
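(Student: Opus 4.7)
The plan is to verify the single hypothesis of Theorem \ref{Main thm} for $L = A$, namely that the stabilizer of almost every pair in $G/A$ is compact; the short exact sequence \eqref{SES A} will then follow directly. By $G$-equivariance, the stabilizer of a generic pair $(g_0A, g_1A)$ is conjugate to $A \cap gAg^{-1}$ with $g = g_0^{-1}g_1$, so the task reduces to showing that $A \cap gAg^{-1}$ is compact for Haar-almost every $g \in G$.

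Since the maximal split torus is a connected vector group, $A \cong (\mathbb{R}^{\dim \mathfrak{a}}, +)$, its only compact subgroup is trivial, and a closed subgroup of $A$ is trivial if and only if its Lie algebra vanishes. The problem therefore becomes that of showing $\mathfrak{a} \cap \mathrm{Ad}(g)\mathfrak{a} = \{0\}$ for almost every $g$. Now the condition $\dim(\mathfrak{a} \cap \mathrm{Ad}(g)\mathfrak{a}) \geq 1$ is Zariski-closed on $G$ (it amounts to the vanishing of appropriate minors of the projection of $\mathrm{Ad}(g)|_{\mathfrak{a}}$ onto a fixed complement of $\mathfrak{a}$ in $\mathfrak{g}$), so it suffices to exhibit a single $g$ for which the intersection is trivial: the bad set is then a proper Zariski-closed, hence Haar-null, subset of $G$.

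To produce such a $g$, pick $X \in \mathfrak{n} = \bigoplus_{\alpha \in \Sigma^+}\mathfrak{g}_\alpha$ with nonzero component $X_\alpha$ in every simple root space and set $g := \exp(X)$. Since $[X, \mathfrak{a}] \subset \mathfrak{n}$, the formula $\mathrm{Ad}(g) = \exp(\mathrm{ad}\,X)$ realises $\mathrm{Ad}(g)\mathfrak{a}$ as the graph of a linear map $\psi \colon \mathfrak{a} \to \mathfrak{n}$, and hence $\mathfrak{a} \cap \mathrm{Ad}(g)\mathfrak{a} = \ker \psi$. The lowest-degree term of $\psi(H)$ in the root-height grading of $\mathfrak{n}$ is $[X, H] = -\sum_\alpha \alpha(H) X_\alpha$, which can vanish only if $\alpha(H) = 0$ for every simple restricted root, forcing $H = 0$ since the simple roots span $\mathfrak{a}^*$ by semisimplicity of $G$. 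Once this is in place, Corollary \ref{Thm A} is a direct specialization of Theorem \ref{Main thm}; the main (mild) obstacle is precisely the root-theoretic leading-order calculation just described, the rest being routine.
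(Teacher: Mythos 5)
Your route is genuinely different from the paper's. The paper sends $G/A$ to pairs of opposite points in $G/P$ via $gA\mapsto (gP,gw_0P)$, declares a pair $(x,y)$ in $G/A$ generic when all four cross-pairs of images are opposite (a full-measure condition), and then observes that the stabilizer of $(x,y)$ is contained in the stabilizer of a triple of pairwise opposite points in $G/P$, which is compact by \cite[Proposition 5.1]{Monod}. Your argument is instead self-contained and root-theoretic: you identify the stabilizer of a pair with a conjugate of $A\cap gAg^{-1}$, reduce to showing $\mathfrak{a}\cap\mathrm{Ad}(g)\mathfrak{a}=\{0\}$ generically, and conclude by a determinantal genericity argument plus an explicit unipotent witness. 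The leading-order computation in the height grading (the height-one part of $\psi(H)$ being $-\sum_\alpha \alpha(H)X_\alpha$ over simple restricted roots, which span $\mathfrak{a}^*$) is correct, and the null-set conclusion is fine: the condition is polynomial in $\mathrm{Ad}(g)$, hence a proper real-analytic subset of the connected group $G$. What your approach buys is independence from Monod's compactness result for triples of flags; what the paper's buys is brevity and the avoidance of any exponential-map subtleties.

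There is, however, one genuine gap: the claim that a closed subgroup of $A\cong(\mathbb{R}^{\dim\mathfrak{a}},+)$ is trivial if and only if its Lie algebra vanishes. This is false ($\mathbb{Z}\subset\mathbb{R}$), and it is exactly the direction you use: as written, your Lie-algebra computation only shows that $A\cap gAg^{-1}$ is \emph{discrete}, and an infinite discrete subgroup of $A$ is closed and non-compact, so the hypothesis of Theorem \ref{Main thm} is not yet verified. What you actually need is the identity $A\cap gAg^{-1}=\exp\bigl(\mathfrak{a}\cap\mathrm{Ad}(g)\mathfrak{a}\bigr)$, which is true but requires an argument: if $\exp H=\exp(\mathrm{Ad}(g)H')$ with $H,H'\in\mathfrak{a}$, apply $\mathrm{Ad}$ to get $e^{\mathrm{ad}\,H}=e^{\mathrm{ad}(\mathrm{Ad}(g)H')}$; both exponents are diagonalizable over $\mathbb{R}$ with real eigenvalues, such an operator is the unique real-spectrum diagonalizable logarithm of its exponential, so $\mathrm{ad}\,H=\mathrm{ad}(\mathrm{Ad}(g)H')$, and hence $H=\mathrm{Ad}(g)H'$ because $\mathfrak{g}$ is semisimple. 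Thus $A\cap gAg^{-1}$ is connected and your kernel computation does finish the proof, but this step must be supplied.
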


\subsection*{An explicit example: $\mathrm{SL}(2,\mathbb{K})$} Following the spectral sequence defined in the proof of Theorem \ref{Main thm}, we will exhibit some explicit cocycles lying in the kernel of the evaluation map when $G=\mathrm{SL}(2,\mathbb{K})$, for $\mathbb{K}=\mathbb{R},\mathbb{C}$. If we define $P$ as the group of upper triangular matrices, we can choose as $L=N$ the unipotent radical of $P$ corresponding to unipotent matrices. In this context $G/N$ is diffeomorphic to the punctured plane $\mathbb{K}^2 \setminus \{0\}$ and, since $N \cong \mathbb{K}$, we have a non-trivial kernel contributing to the cohomology group $H^2_m(G \curvearrowright \mathbb{K}^2 \setminus \{ 0 \})$. 

\begin{thm}\label{Thm SL2 N}
Let $G=\mathrm{SL}(2,\mathbb{K})$ with $\mathbb{K}$ equal to either $\mathbb{R}$ or $\mathbb{C}$. Consider the group $N$ of upper unipotent triangular matrices with Lie algebra $\mathfrak{n} \cong \mathbb{K}$. The inclusion 
$$
i:H^1_m(N) \rightarrow H^2_m(G \curvearrowright (\mathbb{K}^2 \setminus \{0\})) 
$$
of the short exact sequence \ref{SES N} is given by sending a $\mathbb{R}$-linear form $\alpha_{\mathfrak{n}} \in \mathfrak{n}^*$ to the measurable almost everywhere defined $G$-invariant cocycle 
$$
\begin{array}{rcl}
i(\alpha_{\mathfrak{n}}):  (\mathbb{K}^2 \setminus \{0\})^3 &\longrightarrow &\mathbb{R}\\
(v_0,v_1,v_2)&\longmapsto & \sum_{i < j} (-1)^{i+j} \left[ \alpha_{\mathfrak{n}}\left(\langle  \frac{v_i}{d_{v_i,v_j}} , \frac{v_j}{\|v_j\|^2} \rangle \right)+\alpha_{\mathfrak{n}}\left( \langle \frac{v_j}{d_{v_i,v_j}} , \frac{v_i}{\|v_i\|^2} \rangle \right)\right],
\end{array}
$$
where $\langle \cdot, \cdot \rangle$ is either the scalar product, for $\mathbb{K}=\mathbb{R}$, or the Hermitian product, for $\mathbb{K}=\mathbb{C}$, and $d_{v_i,v_j}$ is the determinant of the matrix with columns $(v_i|v_j)$.  
\end{thm}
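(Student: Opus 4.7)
The plan is to unwind the spectral sequence construction of the injection $i$ from the proof of Theorem~\ref{Main thm} in the concrete setting $G = \mathrm{SL}(2,\mathbb{K})$, $L=N$, $G/N\cong \mathbb{K}^2\setminus\{0\}$ (via $gN\leftrightarrow ge_1$). Since $N$ is abelian, $H^1_m(N)$ is canonically identified with $\mathrm{Hom}_{\mathbb{R}}(\mathfrak{n},\mathbb{R}) = \mathfrak{n}^\ast$, so $\alpha_\mathfrak{n}$ corresponds to a continuous character $\alpha\colon N\to \mathbb{R}$ via $\alpha(\exp t) = \alpha_\mathfrak{n}(t)$ for $t\in\mathfrak{n}\cong\mathbb{K}$.

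First, I fix a measurable section of $G\to G/N$. A convenient choice is the ``$KA$-section''
$$
s(v) := \begin{pmatrix}a & -\bar b/\|v\|^2\\ b & \bar a/\|v\|^2\end{pmatrix}, \quad v=(a,b)^T\in \mathbb{K}^2\setminus\{0\},
$$
which lies in $KA$ (trivial Iwasawa $N$-component) and satisfies $s(v)e_1=v$. Using $s$, the character $\alpha$ lifts to a measurable $0$-cochain $\tilde\alpha\colon G\to \mathbb{R}$ by $\tilde\alpha(g):=\alpha(s(ge_1)^{-1}g)$, satisfying $\tilde\alpha(gn)-\tilde\alpha(g)=\alpha(n)$. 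A direct matrix computation then yields
$$
s(v)^{-1}s(w) = \begin{pmatrix}\langle v,w\rangle/\|v\|^2 & -\overline{d_{v,w}}/(\|v\|^2\|w\|^2)\\ d_{v,w} & \langle w,v\rangle/\|w\|^2\end{pmatrix},
$$
whose unit determinant reflects the Lagrange identity $|\langle v,w\rangle|^2 + |d_{v,w}|^2 = \|v\|^2\|w\|^2$.

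The spectral sequence underlying Theorem~\ref{Main thm} realizes $i(\alpha_\mathfrak{n})$ as the coboundary of a measurable, non-$G$-invariant $1$-cochain $\psi$ on $(G/N)^2$, obtained by applying $\alpha_\mathfrak{n}$ to the $\mathfrak{n}$-valued ``displacements'' read off from the $N$-parts of $s(v)^{-1}s(w)$ in the two complementary matrix decompositions $\overline{N}AN$ and $NA\overline{N}$ (each valid on a full-measure open subset), and summing the two contributions. After elementary matrix algebra using the explicit entries above, these two displacements simplify exactly to the scalars $\langle v/d_{v,w}, w/\|w\|^2\rangle$ and $\langle w/d_{v,w}, v/\|v\|^2\rangle$, producing
$$
\psi(v,w) = \alpha_\mathfrak{n}\bigl(\langle v/d_{v,w}, w/\|w\|^2\rangle\bigr) + \alpha_\mathfrak{n}\bigl(\langle w/d_{v,w}, v/\|v\|^2\rangle\bigr),
$$
and $i(\alpha_\mathfrak{n})(v_0,v_1,v_2) = \sum_{i<j}(-1)^{i+j}\psi(v_i,v_j)$, matching the statement.

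The main obstacle is this identification step: tracing the spectral sequence at the chain level carefully enough to recognize that the cochain lifting $\alpha_\mathfrak{n}$ assembles into the two-summand combination appearing in the theorem, with one summand contributed by each of the two complementary decompositions. A secondary point is that $G$-invariance of $i(\alpha_\mathfrak{n})$ is not manifest — $\|v\|^2$ and $\langle v,w\rangle$ are not $\mathrm{SL}(2,\mathbb{K})$-invariant individually, so $\psi$ itself is not $G$-invariant — and the invariance of $\delta\psi$ must either emerge automatically from the spectral sequence construction or be verified a posteriori using the cocycle identity together with the Lagrange identity above.
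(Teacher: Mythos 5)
Your overall skeleton is the right one --- trace the zig-zag of the bicomplex at the chain level so that $i(\alpha_{\mathfrak{n}})$ appears as the coboundary of an explicit, non-$G$-invariant cochain $\psi$ on pairs --- and your worry about $G$-invariance is a non-issue: the final cochain $\omega$ satisfies $d^\uparrow\omega=d^\uparrow d^\rightarrow\beta=d^\rightarrow d^\uparrow\beta=(d^\rightarrow)^2\alpha_G=0$, and $d^\uparrow$-closedness in $C^{0,3}$ is precisely $G$-invariance of $\omega(e)$, so it comes for free from working inside the invariant bicomplex. The genuine gap is the step you yourself flag as ``the main obstacle'': the identification of $\psi$. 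You assert, without derivation, that the spectral sequence produces $\psi(v,w)$ as the sum of the $N$-parts of $s(v)^{-1}s(w)$ in the $\overline{N}AN$ and $NA\overline{N}$ decompositions, and that ``elementary matrix algebra'' turns these into $\langle v/d_{v,w},w/\|w\|^2\rangle$ and $\langle w/d_{v,w},v/\|v\|^2\rangle$. Under the natural reading this is false: for $\mathbb{K}=\mathbb{R}$, $v=(1,0)$, $w=(2,1)$ your matrix is $s(v)^{-1}s(w)=\left(\begin{smallmatrix}2&-1/5\\1&2/5\end{smallmatrix}\right)$, whose $\overline{N}AN$- and $NA\overline{N}$-unipotent parts are $b/a=-1/10$ and $b/d=-1/2$, while the two scalars in the theorem are $2/5$ and $2$; and for $v=e_1$, $w=e_2$ both decompositions degenerate even though the pair is generic. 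Since the theorem asserts a specific almost-everywhere-defined function and not merely a cohomology class, this discrepancy cannot be absorbed.

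For comparison, the paper's proof represents $\alpha_{\mathfrak{n}}$ by the \emph{left}-$N$-invariant $1$-cochain $\alpha_G(g_0,g_1)=\overline{\alpha}(\pi_N(g_1))-\overline{\alpha}(\pi_N(g_0))$, where $\pi_N$ is the Iwasawa projection $g=nak\mapsto n$ computed in closed form in Lemma \ref{lemma N projection}; it then solves $d^\rightarrow\alpha_G=d^\uparrow\beta$ explicitly by $\beta(g)(\lambda)=\alpha(\pi_N(\delta_{-\lambda}g))-\alpha(\pi_N(g))$, with $\delta_\lambda$ carrying the basepoint $e_1$ to $\lambda e_2$, and evaluates the induced cochain on a generic pair via the normal form $g_{u,v}(e_1,d_{u,v}e_2)=(u,v)$. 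The two summands of $\psi$ arise there as $\pi_N(\delta_{-d_{u,v}}g_{u,v}^{-1})$ and $-\pi_N(g_{u,v}^{-1})$, i.e.\ as Iwasawa $N$-projections of two \emph{different} group elements, not as the two opposite unipotent parts of a single one; the stated formula is then $d^\rightarrow\beta$ evaluated at $e$. To repair your argument you would need to carry out this (or an equivalent) chain-level computation in full. Note also that your lift $\tilde\alpha(g)=\alpha(s(ge_1)^{-1}g)$ is \emph{right}-$N$-equivariant, whereas the Eckmann--Shapiro model $L^0(G^{p+1})^N$ feeding the column ${}^{II}E_1^{p,1}$ requires left $N$-invariance, so even the entry point of your zig-zag needs adjustment before the computation can start.
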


Then we consider the case of a maximal split torus $A$ for $\mathrm{SL}(2,\mathbb{R})$. In this case we fix as $A$ the subgroup of diagonal matrices with positive entries and the quotient $G/A$ is diffeomorphic to the subset of distinct ordered pairs in $\mathbb{P}^1(\mathbb{R})^2$. Since $A \cong \mathbb{R}$, also in this context we have a non trivial kernel appearing in $H^2_m(G \curvearrowright \mathbb{P}^1(\mathbb{R})^2)$. 

\begin{thm}\label{Thm SL2 A}
Let $G=\mathrm{SL}(2,\mathbb{R})$ and consider the torus $A$ of diagonal matrices with positive entries, with Lie algebra $\mathfrak{a} \cong \mathbb{R}$. The inclusion 
$$
i:H^1_m(A) \rightarrow H^2_m(G \curvearrowright \mathbb{P}^1(\mathbb{R})^2) 
$$
of the short exact sequence \ref{SES A} is given by sending a $\mathbb{R}$-linear form $\alpha_{\mathfrak{a}} \in \mathfrak{a}^*$ to the measurable almost everywhere defined $G$-invariant cocycle
$$
\begin{array}{rcl}
i(\alpha_{\mathfrak{a}}):  (\mathbb{P}^1(\mathbb{R})^2)^3 &\longrightarrow &\mathbb{R}\\
((x_1,x_2),(y_1,y_2),(z_1,z_2))&\longmapsto & \frac{1}{2}\alpha_{\mathfrak{a}}\left( \log|[x_2,y_1,y_2,z_1]-1| \right),
\end{array}
$$
where $[ \ , \ , \ ,  \ ]$ is the usual cross ratio. 
\end{thm}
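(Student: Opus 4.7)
The plan is to instantiate the general chain-level construction of the inclusion $i\colon H^{1}_m(A)\to H^2_m(G\curvearrowright G/A)$ from the proof of Theorem~\ref{Main thm} in the special case $G=\mathrm{SL}(2,\mathbb{R})$ and $L=A$, and then to rewrite the resulting expression as the claimed cross-ratio formula. I would first fix the basepoint $x_0=(\infty,0)\in\mathbb{P}^1(\mathbb{R})^2$, whose $G$-stabiliser contains $A$, so that the orbit map $gA\mapsto(g\cdot\infty,g\cdot 0)$ realises the diffeomorphism between $G/A$ and the set of distinct ordered pairs. I would identify $\mathfrak{a}\cong\mathbb{R}$ via $\mathrm{diag}(e^t,e^{-t})\leftrightarrow t$, so that $\alpha_{\mathfrak{a}}\in\mathfrak{a}^*$ corresponds to the $1$-cocycle $a\mapsto\alpha_{\mathfrak{a}}(\log a)$ on $A$, and choose the explicit measurable section
\[
s(a,b)=\begin{pmatrix} a & b/(a-b)\\ 1 & 1/(a-b)\end{pmatrix}\in\mathrm{SL}(2,\mathbb{R}),\qquad s(a,b)\cdot(\infty,0)=(a,b),
\]
defined on the open dense subset $\{a\neq b\}$.

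Next I would unwind the connecting homomorphism. Following the spectral sequence in the proof of Theorem~\ref{Main thm}, the map $i$ on the chain level extracts the $A$-component (via the Iwasawa decomposition $\mathrm{SL}(2,\mathbb{R})=KAN$) of a specific product built from the $s(p_i)^{\pm 1}$, and evaluates $\alpha_{\mathfrak{a}}$ on its logarithm. A direct $KAN$ computation for the chosen section then expresses these logarithms as $\mathbb{R}$-linear combinations of terms of the form $\tfrac{1}{2}\log|a_i-b_j|$ and $\tfrac{1}{2}\log|a_i-b_i|$, the $\tfrac{1}{2}$ arising from the square root required to normalise to determinant one. Combining the terms according to the simplicial signs and using the identity
\[
[a,b,c,d]-1=\frac{(a-b)(c-d)}{(a-d)(b-c)},
\]
the whole expression collapses to $\tfrac{1}{2}\alpha_{\mathfrak{a}}\bigl(\log|[x_2,y_1,y_2,z_1]-1|\bigr)$ for $p_0=(x_1,x_2),\,p_1=(y_1,y_2),\,p_2=(z_1,z_2)$.

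Finally, I would confirm that the resulting formula defines a measurable $G$-invariant $2$-cocycle on $(\mathbb{P}^1(\mathbb{R})^2)^3$: $G$-invariance is immediate from the $\mathrm{PGL}(2,\mathbb{R})$-invariance of the cross ratio, while the cocycle identity reduces, via the above factorisation, to the telescoping relation
\[
|[b_1,a_2,b_2,a_3]-1|\cdot|[b_0,a_1,b_1,a_3]-1|=|[b_0,a_2,b_2,a_3]-1|\cdot|[b_0,a_1,b_1,a_2]-1|.
\]
The main obstacle will be the second step: reconciling the abstract $A$-factors coming from the spectral sequence with the clean cross-ratio formula demands careful bookkeeping of the Iwasawa components for the specific section, in particular to pin down both the normalisation $\tfrac{1}{2}$ and the precise argument $[x_2,y_1,y_2,z_1]-1$ rather than some a priori different combination of the six coordinates.
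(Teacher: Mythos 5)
Your proposal follows essentially the same route as the paper: fix the basepoint $(\infty,0)$, use the same section $g_{x,y}$, extract the Iwasawa $A$-component (whence the factor $\tfrac12$ from the square-root normalisation), push the $1$-cocycle of $A$ through the zig-zag $C^{1,1}\to C^{1,2}\leftarrow C^{0,2}\to C^{0,3}$ of the double complex, and let the alternating sum telescope to the cross-ratio expression. The only imprecision is that the intermediate cochain $\beta$ also contains $K$-dependent terms of the form $\log(x^2+1)$ (not just $\log|a_i-b_j|$), but these depend on single points and cancel in the alternating sum, so your argument goes through as in the paper.
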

 
\subsection*{Plan of the paper}
In Section \ref{sec main thm} we prove Theorem \ref{Main thm} via the use of spectral sequences. In Section \ref{sec derived series torus} we show that the main theorem holds for the cases of the derived series associated to the unipotent radical of a minimal parabolic subgroup (Corollary \ref{Thm N}) and for the maximal split torus (Corollary \ref{Thm A}). We conclude by an explicit computation on the group $\mathrm{SL}(2,\mathbb{K})$ for $\mathbb{K}=\mathbb{R},\mathbb{C}$ in Section \ref{sec unipotent radical SL2} and Section \ref{sec maximal split torus SL2}. 

\section{Proof of Theorem \ref{Main thm}}\label{sec main thm}

We are going to follow the line of Monod's proof of \cite[Theorem B]{Monod}. Let $G$ be a connected semisimple Lie group with finite center. Let $P$ be a minimal parabolic subgroup and consider $L<P$ such that the stabilizer of almost every pair of points in $G/L$ is compact. We define the bicomplex 
$$
C^{p,q}:=L^0(G^{p+1},L^0((G/L)^q))^G \cong L^0(G^{p+1} \times (G/L)^q)^G,
$$ 
with two differentials given by
$$
d^\uparrow:C^{p,q} \rightarrow C^{p+1,q}, \ \ d^\rightarrow:C^{p,q} \rightarrow C^{p,q+1}.
$$
The first differential is simply the usual homogeneous differential on the $G$-variable, whereas the second one coincides with the homogeneous differential on $G/L$ multiplied by $(-1)^{p+1}$. The multiplicative constant ensures that the differentials commute, that is $d^\uparrow d^\rightarrow = d^\rightarrow d^\uparrow$. 

The bicomplex that we introduced leads naturally to two different spectral sequences. The first one has as first page
\begin{equation}\label{eq first spectral sequence}
{}^IE_1^{p,q}:=(H^q(C^{p,\ast},d^\rightarrow),d_1=d^\uparrow). 
\end{equation}

\begin{prop}\label{prop collapse}
Let ${}^IE_1^{p,q}$ be the first page of the spectral sequence defined by Equation \eqref{eq first spectral sequence}. Then the spectral sequence degenerates immediately to zero, that is ${}^IE_1^{p,q}=0$ for every $p,q \geq 0$. 
\end{prop}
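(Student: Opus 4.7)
The plan is to prove Proposition~\ref{prop collapse} by showing that, for each fixed $p\ge 0$, the complex $(C^{p,\bullet},d^\rightarrow)$ is chain-contractible; this forces ${}^IE_1^{p,q}=0$ for every $q\ge 0$, including the boundary case $q=0$. The naive candidate for a contracting homotopy --- ``prepend a basepoint $x_*\in G/L$'' --- is not $G$-equivariant, so it does not descend to the $G$-invariant cochains. The key is first to absorb the $G$-action by a normalization and then to run the homotopy in the non-equivariant picture.

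First I would eliminate the invariance. Because $G$ acts freely on itself by left translation, every $G$-invariant $f\in L^0(G^{p+1}\times(G/L)^q)^G$ is determined by its values on $\{e\}\times G^p\times(G/L)^q$. The restriction map $f\mapsto \tilde f$, with $\tilde f(g_1,\dots,g_p;x_1,\dots,x_q):=f(e,g_1,\dots,g_p;x_1,\dots,x_q)$, yields a natural isomorphism
$$
C^{p,q}\;\cong\;L^0(G^p\times(G/L)^q),
$$
under which $d^\rightarrow$ becomes, up to the sign $(-1)^{p+1}$ (cohomologically irrelevant), the standard simplicial coboundary $d_{G/L}$ acting on the $(G/L)$-arguments alone.

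Next I would contract using a basepoint. For a generic $x_*\in G/L$ --- generic in the sense that the operator $F\mapsto F(\,\cdot\,,x_*,\,\cdot\,)$ descends to $L^0$-equivalence classes, which by Fubini holds for almost every $x_*$ --- set $hF(g;x_1,\dots,x_q):=F(g;x_*,x_1,\dots,x_q)$. A routine simplicial computation then yields the chain-homotopy identity $d_{G/L}\circ h+h\circ d_{G/L}=\id$ for every $q\ge 0$. In the boundary case $q=0$ the differential $d_{G/L}:L^0(G^p)\to L^0(G^p\times G/L)$ is the constant-extension $F\mapsto F\otimes 1_{G/L}$, and $h(F\otimes 1_{G/L})=F$ handles $H^0=0$ simultaneously.

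The main obstacle is purely measure-theoretic: point evaluation at a fixed $x_*$ is not a priori well-defined on $L^0$-equivalence classes. As indicated above, this is resolved by Fubini, which guarantees that almost every $x_*\in G/L$ is admissible; the same device appears in Monod's treatment of the boundary complex, which is the template for the present argument. Once the chain contraction is in place, acyclicity of every $(C^{p,\bullet},d^\rightarrow)$ immediately gives ${}^IE_1^{p,q}=0$ for all $p,q\ge 0$, as claimed.
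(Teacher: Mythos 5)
Your proposal is correct and takes essentially the same route as the paper: both arguments first pass to the inhomogeneous complex $L^0(G^p\times(G/L)^\bullet)$ by discarding the $G$-invariance together with one $G$-variable, and then establish acyclicity in the $G/L$-direction via the generic-basepoint contraction (the paper outsources this step to Monod's Lemmas 2.1 and 2.4 --- exactness of $L^0((G/L)^\bullet)$ plus exactness of the functor $L^0(G^p,\,\cdot\,)$ --- whereas you carry the $G^p$-parameters through the homotopy directly). The one point to phrase with care is that the set of admissible basepoints $x_*$ depends on the cocycle being contracted, so $h$ is not a single operator defined on the whole complex; since acyclicity only requires contracting one cocycle at a time, this is harmless.
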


\begin{proof}
The first page of the spectral sequence is determined by taking the right differential $d^\rightarrow$ in the bicomplex $C^{p,q}$. More precisely, it is given by the cohomology of the cocomplex 
$$
L^0(G^{p+1},L^0((G/L)^{q-1}))^G \rightarrow L^0(G^{p+1},L^0((G/L)^q))^G \rightarrow L^0(G^{p+1},L^0((G/L)^{q+1}))^G.
$$
Such cocomplex is isomorphic to its \emph{inhomogeneous} variant obtained by getting rid of the $G$-invariance and deleting one $G$-variable, that is
\begin{equation}\label{inhomogenous complex} 
L^0(G^{p},L^0((G/L)^{q-1})) \rightarrow L^0(G^{p},L^0((G/L)^q)) \rightarrow L^0(G^{p},L^0((G/L)^{q+1})).
\end{equation}
Such isomorphism preserves the differential on $G/L$, which is still the homogeneous one suitably weighted with a sign depending on $p$. In virtue of \cite[Lemma 2.4]{Monod} the cocomplex 
\begin{equation}\label{resolution G/L}
0 \rightarrow \R \rightarrow L^0(G/L) \rightarrow L^0((G/L)^2) \rightarrow 
\end{equation}
is exact. Since the cohomology of the cocomplex of Equation \ref{inhomogenous complex} is obtained by the one of Equation \ref{resolution G/L} by applying the functor $L^0(G^{p}, \ \cdot \ )$, and the latter is an exact functor by \cite[Lemma 2.1]{Monod}, the statement follows. 
\end{proof}

Since ${}^IE_1^{p,q}$ vanishes, from now on we are going to focus on the second spectral sequence, namely
$$
{}^{II}E_1^{p,q}:=(H^p(C^{\ast,q},d^\uparrow),d_1=d^\rightarrow).
$$

The $(p,q)$-entry of the first page is given by 
$$
{}^{II}E_1^{p,q}=H^p_m(G,L^0((G/L)^q)). 
$$
We are going to compute explicitly all the differentials in degree one. Before starting we want to point out a problem related to the cohomological restriction induced by the inclusion $L \rightarrow G$. Since $L$ is a closed subgroup of $P$, it has measure zero with respect to the Haar measure on $G$. As a consequence, it would be meaningless to restrict an element of $L^0(G^p)^G$ to $L$. To overcome this difficulty, we will realize the cohomology of $L$ as the cohomology of the cocomplex $(L^0(G^{*+1})^L,d^*)$. Indeed the functor that assigns to every Polish $L$-module $A$ the cohomology of the cocomplex $(L^0(G^{\ast+1},A)^L,d^\ast)$ satisfies the characterizing proprerties given by Moore \cite[Pag. 16]{Moore}, and hence its computes the measurable cohomology of $L$ with coefficients in $A$ \cite[Theorem 2]{Moore}.

\begin{lem}\label{vanishing differential}
For $p>1$, the first differential
$$
d_1:H^p_m(G) \rightarrow H^p_m(G,L^0(G/L)) 
$$
induced by the inclusion of coefficients is trivial.
\end{lem}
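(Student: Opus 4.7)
The plan is to recognise $d_1$ at the $(p,0)$-spot as the restriction homomorphism $\mathrm{res}^G_L: H^p_m(G) \to H^p_m(L)$ induced by the inclusion $L\hookrightarrow G$, and then to argue that this restriction vanishes in degree $\ge 2$ by factoring through $H^p_m(P)$.

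First I would invoke the Eckmann--Shapiro-type identification recalled in the paragraph preceding the lemma. The evaluation-at-basepoint map
$$
\varphi : L^0(G^{p+1}, L^0(G/L))^G \xrightarrow{\sim} L^0(G^{p+1})^L, \qquad \varphi(f)(g_0, \ldots, g_p) := f(g_0, \ldots, g_p, eL),
$$
is a cochain-level isomorphism and therefore yields $H^p_m(G, L^0(G/L)) \cong H^p_m(L)$, where the right-hand side is computed (by Moore's characterization) via the cocomplex $(L^0(G^{*+1})^L,d^*)$. Under $\varphi$, the horizontal differential $d^\rightarrow: C^{p, 0} \to C^{p, 1}$, which up to sign sends a $G$-invariant $c \in L^0(G^{p+1})^G$ to the same function viewed as constant in the $G/L$-variable, becomes the tautological inclusion $L^0(G^{p+1})^G \hookrightarrow L^0(G^{p+1})^L$ (since $G$-invariance implies $L$-invariance). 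Passing to cohomology, $d_1$ is nothing but the restriction $\mathrm{res}^G_L$.

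Since $L<P$, functoriality of restriction gives the factorisation $\mathrm{res}^G_L = \mathrm{res}^P_L \circ \mathrm{res}^G_P$, so it suffices to show that $\mathrm{res}^G_P: H^p_m(G) \to H^p_m(P)$ vanishes for $p\ge 2$. But this is exactly the analogous step in Monod's spectral sequence for $G \curvearrowright G/P$: the same Eckmann--Shapiro identification realises the $d_1$-differential at $(p,0)$ in \emph{his} spectral sequence as $\mathrm{res}^G_P$, and its vanishing in degree $\ge 2$ is precisely the technical key to \cite[Theorem B]{Monod} (without it the evaluation map on $G/P$ could not be surjective onto $H^p_m(G)$). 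Quoting this vanishing from \cite{Monod} concludes the argument.

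The main obstacle is therefore this last step, the vanishing of $\mathrm{res}^G_P$ in degrees $\ge 2$, which we import from \cite{Monod}. A self-contained alternative would be to pass to continuous cohomology via Austin--Moore, identify $H^*_{ct}(G) \cong (\wedge^* \mathfrak{s}^*)^K$ (with trivial Chevalley--Eilenberg differential) and $H^*_{ct}(P) \cong H^*(\mathfrak{p}, M)$ via van Est, and then exhibit the pullback of an arbitrary $K$-invariant form along the Iwasawa projection $\mathfrak{p}/\mathfrak{m} = \mathfrak{a} \oplus \mathfrak{n} \to \mathfrak{s}$ as a coboundary in the Chevalley--Eilenberg complex of $(\mathfrak{p}, M)$, via Cartan's magic formula applied to a regular element of $\mathfrak{a}$.
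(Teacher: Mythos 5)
Your proposal is correct and follows essentially the same route as the paper: identify $d_1$ at the $(p,0)$-spot with the restriction $H^p_m(G)\to H^p_m(L)$ via the Eckmann--Shapiro isomorphism, factor it as $\mathrm{res}^P_L\circ\mathrm{res}^G_P$, and quote the vanishing of $\mathrm{res}^G_P$ for $p>1$, for which the paper cites \cite[Corollary 3.2]{Monod} or \cite[Corollary 3]{Wienhard}. The only cosmetic difference is that you work with the evaluation-at-basepoint inverse of the induction isomorphism rather than the induction map itself, and you append an optional self-contained van Est argument that the paper does not need.
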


\begin{proof}
It is immediate to verify that the differential $d_1$ is conjugated via the Eckmann-Shapiro isomorphism \cite[Theorem 6]{Moore} to the restriction from $G$ to $L$. As noticed by Monod \cite[Proof of Proposition 4.1]{Monod}, the Eckmann-Shapiro induction can be realized by the isomorphism given by
$$
\alpha: L^0(G^{p+1})^L \rightarrow L^0(G^{p+1},L^0(G/L))^G, \ \ \alpha(f)(g_0,\ldots,g_p)(gL):=f(g^{-1}g_0,\ldots,g^{-1}g_p). 
$$

We can consider the following commutative diagram 
\begin{equation}\label{diagram restriction}
\xymatrix{
L^0(G^{p+1})^G \ar[rr] \ar[d] && L^0(G^{p+1},L^0(G/L))^G \ar[d] \\
L^0(G^{p+1})^L \ar[rr]^{\hspace{-15pt}\alpha} && L^0(G^{p+1},L^0(G/L))^G,
}
\end{equation}
where the top map is the change of coefficients induced by the inclusion $\mathbb{R} \rightarrow L^0(G/L)$, the left arrow is the restriction map, the bottom arrow is the induction map in the Eckmann-Shapiro isomorphism and the right map is the identity. 

By passing to the cohomology of Diagram \ref{diagram restriction} we get
$$
\xymatrix{
H^p_m(G) \ar[rr]^{d_1} \ar[d]^{\mathrm{res}} && H^p_m(G,L^0(G/L)) \ar[d]^{=} \\
H^p_m(L) \ar[rr] && H^p_m(G,L^0(G/L)).
}
$$
Since the bottom map is the Eckmann-Shapiro isomorphism, it is sufficient to show that the restriction map $\mathrm{res}$ is zero. Since $L<P$ we get that the restriction map from $G$ to $L$ factors to the restriction from $G$ to $P$, that is we a commutative diagram
$$
\xymatrix{
H^p_m(G) \ar[rd]_{\mathrm{res}_P} \ar[rr]^{\mathrm{res}_L} && H^p_m(L)  \\ 
& H^p_m(P) \ar[ur] ,&
}
$$
where we wrote $\mathrm{res}_P$ and $\mathrm{res}_L$ to distinguish the different restrictions. By either \cite[Corollary 3]{Wienhard} or \cite[Corollary 3.2]{Monod} the restriction map $\mathrm{res}_P$ is zero for every $p>1$ and the claim follows. 
\end{proof}

Our next step is to prove that the remaining columns in the first page ${}^{II}E_1^{p,q}$ vanish except for the elements in the first row.

\begin{prop}\label{vanishing columns}
The measurable cohomology group $H^p_m(G,L^0((G/L)^q))$ vanishes for $p>1$ and $q \geq 2$. 
\end{prop}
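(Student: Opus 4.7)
The plan is to reduce the computation to the vanishing of measurable cohomology of compact groups in positive degree, via an orbit decomposition of $(G/L)^q$ combined with the Eckmann-Shapiro isomorphism already used in the proof of Lemma \ref{vanishing differential}.

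The starting observation is that, for $q \geq 2$, the hypothesis on $L$ forces almost every point in $(G/L)^q$ to have compact $G$-stabilizer: the stabilizer of a tuple $(x_1,\dots,x_q)$ is contained in the stabilizer of the pair $(x_1,x_2)$, which is compact for almost every such pair by assumption. In particular, the problematic phenomenon driving the non-trivial kernel in Monod's case (where the stabilizer of a generic pair in $(G/P)^2$ is the non-compact torus $A$) does not occur in our setting.

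Next I would partition $(G/L)^q$ into $G$-orbits by means of a measurable cross-section, using standard Borel selection results for Polish $G$-actions. This produces a measurable family $(K_\omega)_{\omega\in\Omega}$ of compact subgroups of $G$ together with a $G$-equivariant measurable identification
$$
(G/L)^q \cong \int_{\Omega}^{\oplus} G/K_\omega \, d\mu(\omega),
$$
and, by Fubini, an induced $G$-equivariant decomposition of the coefficient module
$$
L^0((G/L)^q) \cong \int_{\Omega}^{\oplus} L^0(G/K_\omega) \, d\mu(\omega).
$$
For each fixed $\omega$, the Eckmann-Shapiro isomorphism \cite[Theorem 6]{Moore}, realised exactly as in the proof of Lemma \ref{vanishing differential}, gives
$$
H^p_m(G, L^0(G/K_\omega)) \cong H^p_m(K_\omega, \mathbb{R}),
$$
which vanishes for every $p \geq 1$ because $K_\omega$ is compact: averaging cochains against the Haar probability measure on $K_\omega$ provides a contracting homotopy of the standard homogeneous resolution.

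The final step is to commute cohomology with the direct integral over $\Omega$. Computing $H^p_m(G, L^0((G/L)^q))$ via the cocomplex $(L^0(G^{\ast+1} \times (G/L)^q)^G, d^{\uparrow})$ and invoking the exactness of $L^0(G^p, \cdot)$ from \cite[Lemma 2.1]{Monod}, we may analyse the fibrewise complexes $L^0(G^{\ast+1} \times G/K_\omega)^G$: they are exact in positive degree by the previous step, and this property passes to the integrated complex by Fubini. The main obstacle I anticipate is precisely this measure-theoretic commutation, since $(G/L)^q$ is a non-transitive $G$-space with varying stabilizers and the orbit space $\Omega$ need not be standard a priori; however, the Polish framework of \cite{Monod} and the exact-functor formalism already deployed in the paper supply the tools to execute the integration rigorously.
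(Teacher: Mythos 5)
Your overall strategy---reduce to compact stabilizers of generic $q$-tuples and then kill the cohomology by Eckmann--Shapiro plus vanishing for compact groups---is the same as the paper's, and your opening observation (the stabilizer of a $q$-tuple sits inside the stabilizer of its first two coordinates) is exactly right. However, there are two genuine gaps, and you have correctly identified the more serious one yourself without closing it. First, the existence of a Borel cross-section for the orbit partition is \emph{not} a ``standard Borel selection result for Polish $G$-actions'': for a general Polish action the orbit equivalence relation need not be smooth and no such section exists (think of an irrational rotation). The paper gets the section from Zimmer's criterion \cite[Theorem 2.1.14, Theorem A.7]{Zimmer} after first noting that the orbits of $G$ on $(G/L)^q$ are locally closed, which is a specific algebraic fact about this action, not a generality.

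Second, and decisively, the step ``exactness of the fibrewise complexes passes to the integrated complex by Fubini'' is false as stated: a direct integral of exact complexes over a non-atomic base $\Omega$ need not be exact unless one produces a \emph{measurable field} of contracting homotopies (here, the Haar averagings over the varying groups $K_\omega$ together with the identifications of orbits with $G/K_\omega$ would all have to be chosen measurably in $\omega$), and you do not construct one. The paper sidesteps this entirely by reorganizing the decomposition: using Cartan's fixed point theorem it shows there are only \emph{countably many} conjugacy classes of stabilizers, so $(G/L)^q$ minus a null set splits into countably many $G$-invariant pieces $Y_j \cong G/K_j \times X_j$ with a \emph{fixed} compact group $K_j$ and a trivial $G$-space $X_j$. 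Then $L^0(Y_j) \cong L^0(G/K_j, L^0(X_j))$, a single application of Eckmann--Shapiro gives $H^p_m(K_j, L^0(X_j))$, and this vanishes by \cite[Lemma 2.3]{Monod}; the continuum direction of the orbit space is absorbed into the coefficient module $L^0(X_j)$ rather than integrated over, and only a countable product of cohomology groups remains, which does commute with cohomology. You should either adopt this grouping-by-stabilizer-type argument or supply the measurable field of homotopies; as written the integration step does not go through.
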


\begin{proof}
Since the orbits of the $G$-action on $(G/L)^q$ are locally closed, the action is smooth in the sense of Zimmer by \cite[Theorem 2.1.14]{Zimmer}.

By assumption we know that the stabilizer of almost every pair in $G/L$ is compact. As a consequence, the stabilizer of almost every $q$-tuple of points in $G/L$ is compact. This means that, for almost every orbit, the smoothness of the action determines an identification between that orbit and a quotient of the form $G/K_0$, for some compact group $K_0<G$. By Cartan's Fixed Point Theorem, such a group $K_0$ must be contained in some maximal compact subgroup of $G$. As a consequence there are at most countably many $G$-conjugacy classes of such compact groups. 

By the smoothness of the action \cite[Theorem A.7]{Zimmer}, there exists a Borel measurable section with respect to the projection $(G/L)^q \rightarrow G \backslash (G/L)^q$. We first discard a measure zero subset $\Delta \subset (G/L)^q$ where the stabilizers are not compact. We can decompose $(G/L)^q \setminus \Delta$ into disjoint union of measurable $G$-invariant subsets $Y_j$, and on each $Y_j$ the stabilizer of a $q$-tuple is conjugated to $K_j$. This fact, together with the existence of the Borel section, implies that there is a Borel $G$-isomorphism between $(G/L)^q \setminus \Delta$ and a disjoint countable union of $G$-spaces $G/K_j \times X_j$, where $K_j$ is compact and $X_j$ is a measure space with trivial $G$-action which is the quotient of $Y_j$. Thanks to the previous isomorphism, the measurable cohomology $H^p_m(G,L^0((G/L)^q))$ is the product of the cohomology groups $H^p_m(G,L^0(G/K_j \times X_j))$. Using Fubini's Theorem and the Eckmann-Shapiro isomorphism we obtain 
$$
H^p_m(G,L^0(G/K_j \times X_j)) \cong H^p_m(G,L^0(G/K_j,L^0(X_j))) \cong H^p_m(K_j,L^0(X_j)). 
$$
The compactness of $K_j$, together with \cite[Lemma 2.3]{Monod}, forces the vanishing of the group $H^p_m(K_j,L^0(X_j))$, as desired. 
\end{proof}

We are finally ready to prove the main theorem of the section.

\begin{proof}[Proof of Theorem \ref{Main thm}] 
We describe the first page ${}^{II}E^{p,q}_1$. The first column is given by the cohomology of $G$, that is ${}^{II}E_1^{p,0} \cong H^p_m(G)$ for $p \geq 1$. The second column is isomorphic to the cohomology of $L$ by the Eckmann-Shapiro isomorphism, namely ${}^{II}E_1^{p,1} \cong H^p_m(L)$. Moreover the differential $d_1$ from the first column to the second one vanishes identically when $p \geq 1$ by Lemma \ref{vanishing differential}. 

The remaining columns in the first page vanish for $p>1$ by Proposition \ref{vanishing columns}, since ${}^{II}E_1^{p,q} \cong H^p_m(G,L^0((G/L)^q))$ with $q \geq 2$. We are only left with the first row which corresponds to the $G$-invariants of the coefficient $G$-modules $L^0((G/L)^q)^G$ and $d_1$ boils down to the usual homogeneous differential. The first page is depicted in Figure \ref{Page 1}.

\begin{figure}[!h]
\centering
\begin{tikzpicture}
  \matrix (m) [matrix of math nodes,
             nodes in empty cells,
             nodes={minimum width=9ex,
                    minimum height=9ex,
                    outer sep=-3pt},
             column sep=2ex, row sep=-1ex,
             text centered,anchor=center]{
         p      &         &          &          & \\
          \cdots & \cdots & \cdots & \cdots   & \cdots \\
          3    & \ H^3_m(G) \  & \ H^3_m(L) \ &\   0 & \ \cdots & \\
          2    &  \ H^2_m(G) \ &\  H^2_m(L) \  & \  0 & \  \cdots &  \\
          1    & \ H^1_m(G) \ & \ H^1_m(L) \ & \  0 & \   \cdots & \\
          0     & \ \mathbb{R}\   &\  \textup{L}^0(G/L)^G \ & \  \textup{L}^0((G/L)^2)^G  \ & \  \cdots & \\
    \quad\strut &   0  &  1  &  2   &  \cdots & q \strut \\};


\draw[->](m-3-2.east) -- (m-3-3.west)node[midway,above ]{$0$};
\draw[->](m-3-3.east) -- (m-3-4.west)node[midway,above ]{$d^\rightarrow$};

\draw[->](m-4-2.east) -- (m-4-3.west)node[midway,above ]{$0$};
\draw[->](m-4-3.east) -- (m-4-4.west)node[midway,above ]{$d^\rightarrow$};

\draw[->](m-5-2.east) -- (m-5-3.west)node[midway,above ]{$0$};
\draw[->](m-5-3.east) -- (m-5-4.west)node[midway,above ]{$d^\rightarrow$};

\draw[->](m-6-2.east) -- (m-6-3.west)node[midway,above ]{$\delta$};
\draw[->](m-6-3.east) -- (m-6-4.west)node[midway,above ]{$\delta$};

\draw[thick] (m-1-1.east) -- (m-7-1.east) ;
\draw[thick] (m-7-1.north) -- (m-7-6.north) ;
\end{tikzpicture}
\caption{The first page ${}^{II}E_1$}\label{Page 1}
\end{figure}
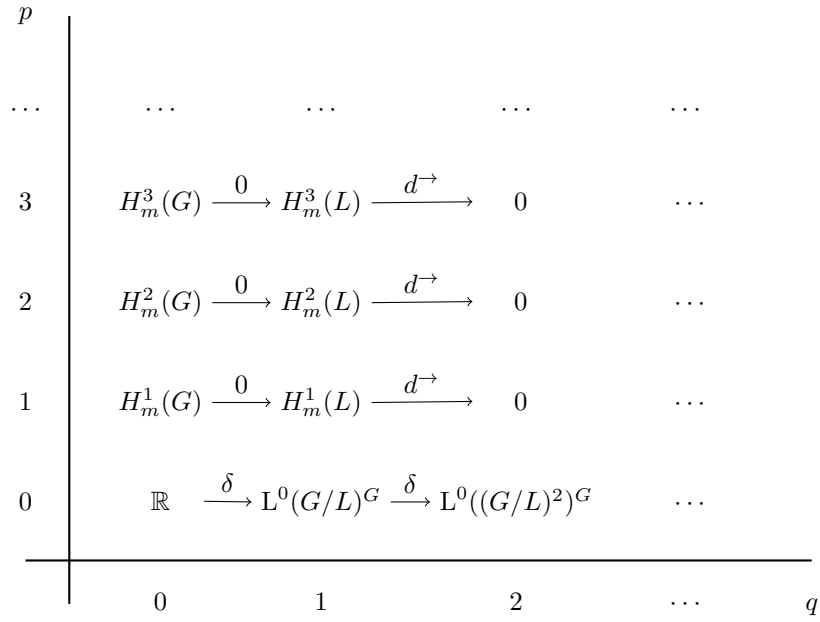

If we now move to the second page of the spectral sequence ${}^{II}E_2^{p,q}$, we immediately see that the first two columns survive and in the first row the cohomology groups of the action of $G$ on $G/L$ appear, namely $H^*_m(G \curvearrowright G/L)$. We refer to Figure \ref{Page 2} for the second page of the spectral sequence. 

\begin{figure}[!h]
\centering
\begin{tikzpicture}
  \matrix (m) [matrix of math nodes,
             nodes in empty cells,
             nodes={minimum width=9ex,
                    minimum height=9ex,
                    outer sep=-3pt},
             column sep=2ex, row sep=-1ex,
             text centered,anchor=center]{
         p      &         &          &          & \\
          \cdots & \cdots & \cdots & \cdots   & \cdots \\
          3    & \ H^3_m(G) \  & \ H^3_m(L) \ &\  0 & \ \cdots & \\
          2    &  \ H^2_m(G) \ &\  H^2_m(L) \  & \ 0 & \  \cdots &  \\
          1    & \ H^1_m(G) \ & \ H^1_m(L) \ &  \  0 & \   \cdots & \\
          0     & \ 0\   &\  0 \ & \  H^1_m(G \curvearrowright G/L)  \ &  \ \cdots & \\
    \quad\strut &   0  &  1  &  2  &  \cdots & q \strut \\};


\draw[->](m-3-2) -- (m-4-4);
\draw[->](m-3-3) -- (m-4-5);

\draw[->](m-4-2) -- (m-5-4);
\draw[->](m-4-3) -- (m-5-5);

\draw[->](m-5-2) -- (m-6-4);
\draw[->](m-5-3) -- (m-6-5);

\draw[thick] (m-1-1.east) -- (m-7-1.east) ;
\draw[thick] (m-7-1.north) -- (m-7-6.north) ;
\end{tikzpicture}
\caption{The second page ${}^{II}E_2$} \label{Page 2}
\end{figure}
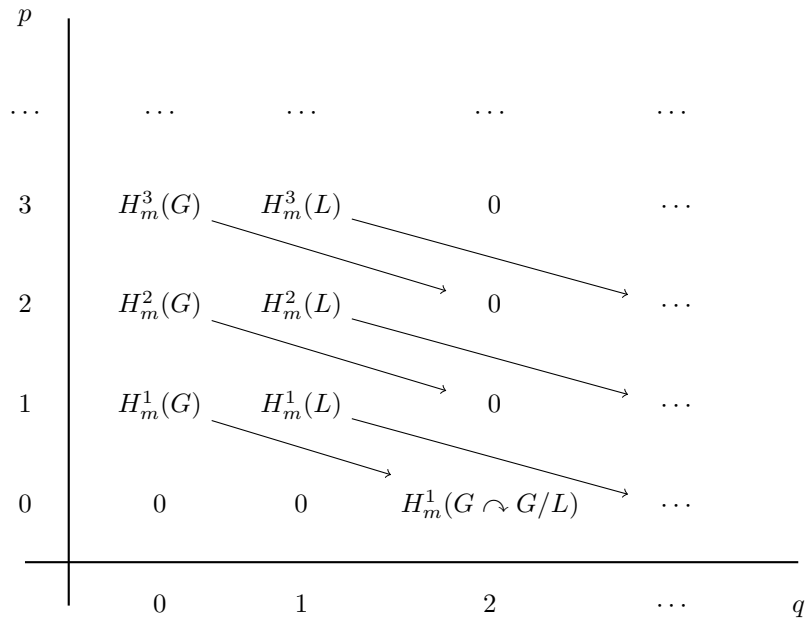

By Proposition \ref{prop collapse} we know that the spectral sequence ${}^IE_1^{p,q}$ degenerates immediately to zero. The spectral sequence ${}^{II}E_1^{p,q}$ must converge to the same limit, hence it must degenerate to zero \cite[Appendice A]{Guichardet}. If we denote by $d_p$ the differential corresponding to the $p$-th page ${}^{II}E_p$, the only way this can happen is that 

\begin{itemize}
\item we have an isomorphism $H^p_m(G \curvearrowright G/L) \cong H^p_m(G)$ for $p=0,1$, 
\item the differential $d_{p+1}$ gives an embedding of $H^{p}_m(L)$ into $H^{p+1}_m(G \curvearrowright G/L)$,
\item the differential $d_{p+2}$ gives an isomorphism between $H^{p+1}_m(G)$ and the quotient \mbox{$H^{p+1}_m(G \curvearrowright G/L)/d_{p+1}(H^p_m(L))$.} 
\end{itemize}
This concludes the proof of the existence of the desired short exact sequence. 

We now proceed with the compatibility with respect to inclusions. Take $L_0<L_1<P$ two closed subgroups with the desired compactness of 
stabilizers of pairs. The inclusion $L_0 \rightarrow L_1$ induces a projection $\pi:G/L_0 \rightarrow G/L_1$ which preserves the $G$-invariant measure classes. More precisely, if $\nu_0$ and $\nu_1$ are two quasi $G$-invariant measures on $G/L_0$ and $G/L_1$, respectively, then $\pi_*(\nu_0)$ and $\nu_1$ are equivalent. This implies that the natural induced map $L^0(G/L_1) \rightarrow L^0(G/L_0)$ determines a well-defined map at the level of bicomplexes
$$
\pi^{p,q}:L^0(G^{p+1},L^0((G/L_1)^q))^G \rightarrow L^0(G^{p+1},L^0((G/L_0)^q))^G.
$$
The map $\pi^{p,q}$ is compatible with both the differential $d^\uparrow$ and $d^\rightarrow$ and it descends to a map between the first pages of the spectral sequences, that is
$$
{}^{II}\pi^{p,q}_1:H^p_m(G,L^0((G/L_1)^q))^G \rightarrow H^p_m(G,L^0((G/L_0)^q))^G .
$$
When $p>1$ and $q=0$ the map ${}^{II}\pi^{p,q}$ boils down to the identity on the measurable cohomology of $G$. For $p>1$ and $q=1$, we have a commutative diagram
\begin{equation}\label{diagram inclusion}
\xymatrix{
L^0(G^{p+1})^{L_1} \ar[d]^{\mathrm{res}} \ar[rr]  &&  L^0(G^{p+1},L^0(G/L_1))^G \ar[d]^{\pi^{p,1}} \\
L^0(G^{p+1})^{L_0} \ar[rr] &&  L^0(G^{p+1},L^0(G/L_0))^G,
}
\end{equation}
where the top and the bottom map are the induction maps, the right arrow is the change of coefficients induced by $\pi$ and the left arrow is the restriction. By passing to the cohomology of Diagram \ref{diagram inclusion}, we get
\begin{equation}\label{diagram inclusion cohomology}
\xymatrix{
 H^p_m(L_1) \ar[d]^{\mathrm{res}} \ar[rr]  && H^p_m(G,L^0(G/L_1)) \ar[d]_{{}^{II}\pi^{p,1}_1}  \\
 H^p_m(L_0) \ar[rr] && H^p_m(G,L^0(G/L_0)),
}
\end{equation}
showing that ${}^{II}\pi^{p,1}_1$ is conjugated to the restriction from $L_1$ to $L_0$ via the Eckmann-Shapiro isomorphism. Finally when $p=0$, we get back 
$$
\pi^{0,q}_1:L^0((G/L_1)^q)^G \rightarrow L^0((G/L_0)^q)^G 
$$
which is well-defined because of the $G$-equivariance of $\pi$. The convergence to zero of both spectral sequences and the compatibility of ${}^{II}\pi^{p,q}$ with the differential of every degree imply that we must have Diagram \ref{diagram restriction identity}, and the theorem is proved. \end{proof}

\section{The derived series for the unipotent radical and the maximal split torus} \label{sec derived series torus}

In this section we will prove Corollary \ref{Thm N} and Corollary \ref{Thm A}. The main point will be to prove the compactness of a generic pair in the quotient. 

\begin{proof}[Proof of Corollary \ref{Thm N}] 
We need to show that for almost every pair of points in $G/N_k$ the stabilizer is compact. Since $N_k<P$ we have a $G$-equivariant projection 
$$
p:G/N_k \rightarrow G/P
$$
which preserves the measure classes. We say that two points in $G/N_k$ are \emph{generic} if they are mapped through the map $p$ to a pair of opposite points in $G/P$. Observe that the set of generic points has full measure in $G/N_k \times G/N_k$. 

Consider two generic points in $G/N_k$. Up to the $G$-action, we can suppose that our fixed pair is of the form $(N_k,gN_k) \in (G/N_k)^2$. Let $H=\mathrm{Stab}_G(N_k,gN_k)$ be the stabilizer of the pair. Since $H$ fixes $N_k<N$, it is a subgroup of $N$. The projection $p$ is $G$-equivariant, thus any element of $H$ fixes also the pair $(P,gP)$. Because of the genericity of $(N_k,gN_k)$, the pair $(P,gP)$ is opposite and hence it can be written as $(P,nw_0P)$ where $w_0$ is the longest element in the Weyl group and $n \in N$. As a consequence $H$ must be a subgroup of $nMAn^{-1}$ and hence
$$
H<N \cap nMAn^{-1}.
$$
Since the group on the right is compact, the claim follows. 
\end{proof}

We could define a notion of genericity for $G/A$ as in Corollary \ref{Thm N}, which would also have full measure in $G/A \times G/A$, but would not satisfy the needed compactness. Indeed, in that case, as $(P,w_0P)$ is a pair of opposite points, the pair $(A,w_0A)$ would be generic, but its stabilizer contains $A$. 

\begin{proof}[Proof of Corollary \ref{Thm A}]
Recall that the space $G/A$ surjects on pair of opposite points in $G/P$. More precisely, the surjection is given by 
$$
\varphi:G/A \longrightarrow G/P \times G/P , \ \  \varphi(gA):=(gP,gw_0P). 
$$
We define a pair of points $x,y$ in $G/A$ to be generic if, for $\varphi(x)=(x_1,x_2) \in G/P \times G/P$ and $\varphi(y)=(y_1,y_2) \in G/P \times G/P$, the pairs $(x_i,y_j)$ are opposite for all $i=1,2$ and $j=1,2$. Since for any such $i,j$ the pair $(x_i,y_j)$ is opposite for a subset of $G/A \times G/A$ of full measure, pair of generic points in $G/A$ indeed have full measure.

Finally observe that the stabilizer of the points $(x,y)$ is contained in the stabilizer of the generic triple $(x_1,x_2,y_1)$, which is compact by \cite[Proposition 5.1]{Monod}. 
\end{proof}

\begin{rem}
In the case of the derived series of the unipotent radical $N$, we know actually more. In fact $N_{k+1}<N_k$ and we can apply the second part of Theorem \ref{Main thm} to argue that there is a commutative diagram 
$$
\xymatrix{
H^{p-1}_m(N_k) \ar[rr] \ar[d]_{\mathrm{res}} && H^p_m(G \curvearrowright G/N_k) \ar[rr] \ar[d] && H^p_m(G) \ar[d]^{=}\\
H^{p-1}_m(N_{k+1}) \ar[rr]  && H^p_m(G \curvearrowright G/N_{k+1}) \ar[rr] && H^p_m(G) . \\
}
$$
\end{rem}

\section{The unipotent radical for $G=\mathrm{SL}(2,\mathbb{K})$}\label{sec unipotent radical SL2}

\subsection{Projection on $N$ and differentials} For $G=\mathrm{SL}(2,\mathbb{K})$ we fix the minimal parabolic subgroup of upper triangular matrices with coefficients in $\mathbb{K}$. The associated unipotent radical is given by
$$
N=\left\{  \left. \left( \begin{array}{cc} 1 & x \\ 0 & 1 \end{array} \right) \right| x \in \mathbb{K} \right\} \cong \mathbb{K}. 
$$
As a consequence, if we denote by $\mathfrak{n}^*$ the (real) dual of the Lie algebra associated to $N$, we have that 
$$
H^1_m(N) \cong \mathfrak{n}^* \cong 
\begin{cases}
\mathbb{R}^2, &  \text{for $\mathbb{K}=\mathbb{C}$} \\
\mathbb{R},     &  \text{for $\mathbb{K}=\mathbb{R}$} \\ 
\end{cases}
$$
In a similar way, one can see quite easily that the quotient $G/N$ is diffeomorphic to the punctured plane $\mathbb{K}^2 \setminus \{0\}$. In fact the latter space is $G$-homogeneous and, if we denote by $\mathcal{E}=\{e_1,e_2\}$ the canonical basis of $\mathbb{K}^2$, the stabilizer of the vector $e_1$ is $\mathrm{Stab}_G(e_1)=N$.

Before moving on with our investigation, recall that by the Iwasawa decomposition, we can write any element $g \in G$ in a unique way as the product $g=nak$, where $a \in A$ lies in the maximal split torus of real diagonal matrices with positive entries, $n \in N$ and $k \in K$, where either $K=\mathrm{SO}(2)$ for $\mathbb{K}=\mathbb{R}$, or $K=\mathrm{SU}(2)$, when $\mathbb{K}=\mathbb{C}$. In virtue of such decomposition, we define the $N$-\emph{projection} as the map 
$$
\pi_N:G \rightarrow N, \ \ g=nak \mapsto n.
$$
In what follows we will identify $N$ with the field $\mathbb{K}$.

\begin{lem}\label{lemma N projection}
Given 
$$
g=\left(
\begin{array}{cc}
a_{11} & a_{12} \\
a_{21} & a_{22}
\end{array}
\right) \in \mathrm{SL}(2,\mathbb{K}),
$$  
the $N$-projection of $g$ is given by 
\begin{equation}\label{equation N projection}
\pi_N(g)=\frac{\langle (a_{11},a_{12}), (a_{21},a_{22}) \rangle}{|a_{21}|^2 + |a_{22}|^2} \in \mathbb{K} \cong \mathfrak{n} \cong N,
\end{equation}
where $\langle \cdot, \cdot \rangle$ is either the scalar product, when $\mathbb{K}=\mathbb{R}$, or the Hermitian product, when $\mathbb{K}=\mathbb{C}$. 
\end{lem}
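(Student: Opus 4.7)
The plan is to exploit the defining property of the Iwasawa factors by looking at the product $gg^*$, which eliminates the compact part $k$. Write $g = nak$ with
$$
n = \begin{pmatrix} 1 & x \\ 0 & 1 \end{pmatrix}, \qquad a = \begin{pmatrix} \lambda & 0 \\ 0 & \lambda^{-1} \end{pmatrix}, \qquad k \in K,
$$
where $x \in \mathbb{K}$ is the quantity we want to identify with $\pi_N(g)$ and $\lambda > 0$. Since $K$ consists of (real) orthogonal or unitary matrices we have $k k^* = I$, and since $a$ is real diagonal we have $a = a^*$. Therefore
$$
gg^* = nak k^* a^* n^* = n a^2 n^*.
$$

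First I compute the right-hand side directly:
$$
n a^2 n^* = \begin{pmatrix} \lambda^2 + |x|^2 \lambda^{-2} & x \lambda^{-2} \\ \bar x \lambda^{-2} & \lambda^{-2} \end{pmatrix}.
$$
On the other hand, a straightforward matrix multiplication yields
$$
gg^* = \begin{pmatrix} |a_{11}|^2 + |a_{12}|^2 & \langle (a_{11},a_{12}),(a_{21},a_{22}) \rangle \\ \overline{\langle (a_{11},a_{12}),(a_{21},a_{22}) \rangle} & |a_{21}|^2 + |a_{22}|^2 \end{pmatrix},
$$
where $\langle \cdot , \cdot \rangle$ is the standard scalar product when $\mathbb{K} = \mathbb{R}$ and the Hermitian product (linear in the first variable) when $\mathbb{K}=\mathbb{C}$.

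Comparing the $(2,2)$-entries gives $\lambda^{-2} = |a_{21}|^2 + |a_{22}|^2$, and comparing the $(1,2)$-entries yields $x \lambda^{-2} = \langle (a_{11},a_{12}),(a_{21},a_{22}) \rangle$. Combining these two identities produces the claimed formula \eqref{equation N projection}. The only point one has to take care of is the uniqueness of the Iwasawa decomposition (so that the identification $\pi_N(g) = x$ is well defined), which is standard; no serious obstacle is expected, the computation being a direct linear-algebra verification.
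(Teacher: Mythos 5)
Your proposal is correct and follows essentially the same route as the paper: both compute $gg^*=naa^*n^*$ using the Iwasawa decomposition to kill the compact factor, and then read off $x$ and $\lambda$ by comparing entries. You have simply written out the ``direct computation'' that the paper leaves implicit, and the entries match the claimed formula.
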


\begin{proof}
We denote by $g^*$ the $\mathbb{K}$-adjoint matrix to $g$, that is either its transpose, in the real case, or its conjugated transpose, in the complex case. The map 
$$
p:G \rightarrow X, \ \ p(g)=gg^*:=s
$$
defines a natural projection on the symmetric space $X$ associated to $G$. Recall that such space is the set of positive definite either symmetric matrices, for $\mathbb{K}=\mathbb{R}$, or Hermitian matrices, for $\mathbb{K}=\mathbb{C}$, with determinant equal to one.

 The Iwasawa decomposition of $g=nak$ leads to the following chain of equalities
\begin{equation}\label{equation Iwasawa}
s=gg^*=(nak)(nak)^*=nakk^*a^*n^*=naa^*n^*,
\end{equation}
where the matrix $k$ disappeared because it is either orthogonal or unitary. A direct computation shows that Equation \eqref{equation Iwasawa} is equivalent to Equation \eqref{equation N projection}.
\end{proof}


We conclude the section by computing the differential $d^\rightarrow:C^{p,1} \rightarrow C^{p,2}$. Before doing that, notice that the notion of genericity for pairs of points in $\mathbb{K}^2 \setminus \{ 0 \}$ boils down to the usual linear independence. Additionally the $G$-action is not transitive on linearly independent pairs of $\mathbb{K}^2 \setminus \{ 0 \}$, and the space of orbits is diffeomorphic to $\mathbb{K}^*$. In fact, in the $G$-orbit of a pair of linearly independent vectors $(u,v)$, there is a preferred pair given by $(e_1,d_{u,v}e_2)$, where $d_{u,v}$ is the determinant of the matrix $(u|v)$ with columns given by $u$ and $v$. If $u=(u_1,u_2)$ and $v=(v_1,v_2)$ are linearly independent, we can define 
\begin{equation}\label{equation guv}
g_{u,v}:=
\left(
\begin{array}{cc}
u_1 & \frac{v_1}{d_{u.v}} \\
u_2 & \frac{v_2}{d_{u,v}}
\end{array}
\right) \in \mathrm{SL}(2,\mathbb{K})
\end{equation}
and one has immediately that $g_{u,v}(e_1,d_{u,v}e_2)=(u,v)$. Notice that $g_{u,v}$ is uniquely defined since the stabilizer $\mathrm{Stab}_G(e_1,\lambda e_2)$ is trivial for every $\lambda \neq 0$. 

As a consequence the following isomorphism holds
$$
C^{p,2} = L^0(G^{p+1},L^0((G/N)^2))^G \cong L^0(G^{p+1} \times \mathbb{K}^*) . 
$$

We finally define 
$$
\delta_\lambda:=
\left(
\begin{array}{cc}
0 & -\frac{1}{\lambda} \\
\lambda & 0 
\end{array}
\right), 
$$
so that $\delta_\lambda e_1=\lambda e_2$. It is clear that $\delta_\lambda$ is not unique, since we can modify it by right multiplication by any element in $N$. 

\begin{lem}\label{lemma SL2 differential}
The differential 
$$
d^\rightarrow:L^0(G^{p+1})^N \cong C^{p,1} \rightarrow L^0(G^{p+1} \times \mathbb{K}^*) \cong C^{p,2}
$$
is given by 
$$
d^\rightarrow (\beta)(g_0,\ldots,g_p)(\lambda)=(-1)^{p+1}[\beta(\delta_{-\lambda}g_0,\ldots,\delta_{-\lambda}g_p)-\beta(g_0,\ldots,g_p)]
$$
\end{lem}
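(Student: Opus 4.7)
The plan is to unfold the two standing isomorphisms explicitly and then to apply the simplicial differential on $G/N$ in these coordinates.

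First, I would make explicit the identification $C^{p,1} \cong L^0(G^{p+1})^N$. This is the Eckmann--Shapiro isomorphism $\alpha$ already used in the proof of Lemma~\ref{vanishing differential}, sending $\beta \in L^0(G^{p+1})^N$ to $\alpha(\beta) \in L^0(G^{p+1}, L^0(G/N))^G$ via $\alpha(\beta)(g_0, \ldots, g_p)(gN) = \beta(g^{-1}g_0, \ldots, g^{-1}g_p)$. Two consequences will matter: evaluating the coefficient function at the coset $eN = e_1$ returns $\beta(g_0, \ldots, g_p)$, whereas evaluating at the coset $\delta_\lambda N = \lambda e_2$ returns $\beta(\delta_\lambda^{-1}g_0, \ldots, \delta_\lambda^{-1}g_p)$.

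Next, I would invoke the identification $C^{p,2} \cong L^0(G^{p+1} \times \mathbb{K}^*)$ recorded just before the statement of the lemma. Since the $G$-action on linearly independent pairs in $\mathbb{K}^2 \setminus \{0\}$ is free with $(e_1, \lambda e_2)$, $\lambda \in \mathbb{K}^*$, forming a full system of orbit representatives, every $G$-invariant class $\phi \in C^{p,2}$ is determined almost everywhere by the function $(g_0, \ldots, g_p; \lambda) \mapsto \phi(g_0, \ldots, g_p)(e_1, \lambda e_2)$.

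Finally, $d^\rightarrow$ is by definition $(-1)^{p+1}$ times the simplicial coboundary in the $G/N$-variable, so for $f = \alpha(\beta)$ and $(x_0, x_1) = (e_1, \lambda e_2)$ one has
$$d^\rightarrow f(g_0, \ldots, g_p)(e_1, \lambda e_2) = (-1)^{p+1}\bigl[f(g_0, \ldots, g_p)(\lambda e_2) - f(g_0, \ldots, g_p)(e_1)\bigr].$$
Substituting the two evaluations from the first step, together with the direct computation $\delta_\lambda^{-1} = \delta_{-\lambda}$, produces exactly the formula in the statement. There is no hard step here; the only things to watch are the sign $(-1)^{p+1}$ and the fact that the Eckmann--Shapiro identification introduces \emph{inverses} of the coset representatives on the $G$-arguments.
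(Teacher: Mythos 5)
Your argument is correct and follows essentially the same route as the paper: unfold the Eckmann--Shapiro identification $\overline{\beta}(g_0,\ldots,g_p)(he_1)=\beta(h^{-1}g_0,\ldots,h^{-1}g_p)$, parametrize $C^{p,2}$ by the orbit representatives $(e_1,\lambda e_2)$, and apply the simplicial coboundary on the $G/N$-variable. Your explicit check that $\delta_\lambda^{-1}=\delta_{-\lambda}$ is a small point the paper leaves implicit, but otherwise the two proofs coincide.
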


At a first sight the right-hand side may depend on the particular choice of the element $\delta_\lambda$, but this is not true. In fact, suppose that we substitute $\delta_\lambda$ by multiplying on right by an element in $N$. Its inverse is multiplied on the left and the $N$-invariance of $\beta$ allows to get rid of this issue. 

\begin{proof}
A cochain $\beta \in L^0(G^{p+1})^N$ determines a cochain $\overline{\beta} \in C^{p,1}=L^0(G^{p+1},L^0(G/N))^G$ via the induction map 
$$
\overline{\beta}(g_0,\ldots,g_p)(he_1):=\beta(h^{-1}g_0,\ldots,h^{-1}g_p) .
$$
Fixing our preferred generic pair of the form $(e_1,\lambda e_2)$, we have that the differential $d^\rightarrow \beta \in L^0(G^{p+1} \times \mathbb{K}^*)$ can be written as
$$
d^\rightarrow\beta(g_0,\ldots,g_p)(\lambda):=d^\rightarrow \overline{\beta}(g_0,\ldots,g_p)(e_1,\lambda e_2),
$$
and the latter evaluation makes sense because of the $G$-invariance of the coboundary $d^\rightarrow \overline{\beta}$. 
Using the definition of $d^\rightarrow$ and the fact $\delta_\lambda e_1=\lambda e_2$ we obtain 
\begin{align*}
d^\rightarrow \overline{\beta}(g_0,\ldots,g_p)(e_1,\lambda e_2)=&(-1)^{p+1}[\overline{\beta}(g_0,\ldots,g_p)(\lambda e_2)  - \overline{\beta}(g_0,\ldots,g_p)(e_1)]\\
=&(-1)^{p+1}[\beta(\delta_{-\lambda}g_0,\ldots,\delta_{-\lambda}g_p)-\beta(g_0,\ldots,g_p)].
\end{align*} \end{proof}

\subsection{Proof of Theorem \ref{Thm SL2 N}} We consider a $\mathbb{R}$-linear functional $\alpha_{\mathfrak{n}} \in \mathfrak{n}^*$. Since $\mathfrak{n}$ is diffeomorphic to $N$ through the exponential map, we can build an inhomogeneous cocycle $\overline{\alpha}:N \rightarrow \mathbb{R}$ as follows
$$
\overline{\alpha}(n):=\alpha_{\mathfrak{n}}(\log n)
$$
and it homogeneized variant $\alpha:N^2 \rightarrow \mathbb{R}$ is given by 
$$
\alpha(n_0,n_1):=\overline{\alpha}(n_0^{-1}n_1)=\overline{\alpha}(n_1)-\overline{\alpha}(n_0).
$$
The latter cocycle can be easily extended to the whole group $G$ by precomposing it with the $N$-projection defined by Lemma \ref{lemma N projection}, that is
$$
\alpha_G:G^2 \rightarrow \mathbb{R}, \ \ \alpha_G(g_0,g_1)=\alpha(\pi_N(g_0),\pi_N(g_1)) . 
$$
By construction, the cocycle $\alpha_G$ is $N$-invariant and hence it is an element of $L^0(G^2)^N \cong C^{1,1}$. Additionally its cohomology class in $H^1(C^{1,1},d^\uparrow) \cong H^1_m(N) \cong \mathfrak{n}^*$ coincides with the homomorphism $\alpha_{\mathfrak{n}}$. 

To compute the resulting cocycle in the cohomology of the action, we need to follow $\alpha_G$ in the diagram below:
\begin{equation}
\xymatrix{
\alpha_G \in C^{1,1} \ar[rr] && d^\rightarrow\alpha_G=d^\uparrow \beta \in C^{1,2}  && \\
&& \beta \in C^{0,2} \ar[rr] \ar[u] && d^\rightarrow \beta=:\omega \in C^{0,3} \\
}
\end{equation}

\subsubsection*{Computation of $\beta$} We start computing the right differential of $\alpha_G$. By Lemma \ref{lemma SL2 differential} we have 
$$
d^\rightarrow \alpha_G(g_0,g_1)(\lambda)=\alpha_G(\delta_{-\lambda}g_0,\delta_{-\lambda}g_1)-\alpha_G(g_0,g_1).
$$

If we set 
$$
\beta(g)(\lambda):=\alpha_G(\delta_{-\lambda},\delta_{-\lambda}g)-\alpha_G(e,g),
$$
it is immediate to verify that $d^\uparrow \beta=d^\rightarrow \alpha_G$, as desired. An alternative expression of $\beta$ is given by 
\begin{equation}\label{equation beta g}
\beta(g)(\lambda)=\alpha(\pi_N(\delta_{-\lambda}g))-\alpha(\pi_N(g))
\end{equation}
since both the $N$-projection of $\delta_{-\lambda}$ and $e$ are trivial. In what follows it will be important to write an explicit expression of the cocycle $\overline{\beta}$, obtained by induction, evaluated on $\overline{\beta}(e)(u,v)$, where $(u,v)$ are linearly independent. Thanks to the definition of the matrix $g_{u,v}$ in Equation \ref{equation guv}, we can argue from Equation \ref{equation beta g} to get that
\begin{align*}
\overline{\beta}(e)(u,v)=&\overline{\beta}(e)(g_{u,v}(e_1,d_{u,v}e_2))=\beta(g_{u,v}^{-1})(d_{u,v})\\
=&\alpha(\pi_N(\delta_{-d_{u,v}}g_{u,v}^{-1}))-\alpha(\pi_N(g^{-1}_{u,v}))
\end{align*}
Exploting Lemma \ref{lemma N projection} we can compute explicitly 
$$
\pi_N(\delta_{-d_{u,v}}g^{-1}_{u,v})=\frac{\langle \frac{u}{d_{u,v}} , v \rangle}{\|v\|^2}, \ \  \pi_N(g_{u,v}^{-1})=\frac{-\langle \frac{v}{d_{u,v}},u \rangle}{\|u\|^2}
$$
By the fact that $\alpha$ is $\mathbb{R}$-linear and viewing $\mathbb{K} \cong \mathfrak{n} \cong N$ we obtain 
\begin{equation}\label{equation beta explicit}
\overline{\beta}(e)(u,v)=\alpha_{\mathfrak{n}}\left(\langle  \frac{u}{d_{u,v}} , \frac{v}{\|v\|^2} \rangle \right)+\alpha_{\mathfrak{n}}\left( \langle \frac{v}{d_{u,v}} , \frac{u}{\|u\|^2} \rangle \right).
\end{equation}

\subsubsection*{Computation of $\omega$} Since in this case the computation has only two steps, to get the desired cocycle, it will be sufficient fo evaluate $\omega(e)$ on a generic triple. Using Equation \ref{equation beta explicit}, the computation is immediate:
\begin{align*}
\omega(e)(v_0,v_1,v_2)&=d^\rightarrow \beta(e)(v_0,v_1,v_2)\\
&=-\beta(e)(v_1,v_2)+\beta(e)(v_0,v_2)-\beta(e)(v_0,v_1)\\
&=\sum_{i < j} (-1)^{i+j} \left[ \alpha_{\mathfrak{n}}\left(\langle  \frac{v_i}{d_{v_i,v_j}} , \frac{v_j}{\|v_j\|^2} \rangle \right)+\alpha_{\mathfrak{n}}\left( \langle \frac{v_j}{d_{v_i,v_j}} , \frac{v_i}{\|v_i\|^2} \rangle \right)\right],
\end{align*}
and the theorem is proved. 

\section{The maximal split torus case for $\mathrm{SL}(2,\mathbb{R})$} \label{sec maximal split torus SL2}

\subsection{Projection on A and differentials} For $G=\mathrm{SL}(2,\mathbb{R})$, we fix as $A$ the subgroup of diagonal matrices with positive entries, that is
$$
A=\left\{ \left. a_\lambda:=\left(
\begin{array}{cc}
\lambda & 0 \\
0 & \frac{1}{\lambda}
\end{array}
\right)
\right| 
\lambda \in \mathbb{R}_{>0}
 \right\} . 
$$
If we denote by $\mathfrak{a}$ the Lie algebra associated to $A$, and by $\mathfrak{a}^*$ its dual, we have that
$$
H^1_m(A) \cong \mathfrak{a}^\ast \cong \mathbb{R}. 
$$
The quotient $G/A$ parametrizes a subset of full measure in $\mathbb{P}^1(\mathbb{R})^2$, namely the one of distinct ordered pairs. We choose as basepoint in $G/A$ the pair $(\infty,0)$, for which we clearly have $\mathrm{Stab}_G(\infty,0)=A$. 

Again by the Iwasawa decomposition, we can factor any element $g \in G$ as the product $g=nak$, where $n \in N$ is an upper triangular unipotent matrix, $a \in A$ and $k \in \mathrm{SO}(2)$. The $A$-\emph{projection} of $g \in G$ is defined as follows
$$
\pi_A:G \rightarrow A, \ g=nak \mapsto a. 
$$
The same strategy of the proof of Lemma \ref{lemma N projection} can be used to prove the following

\begin{lem}\label{lemma A projection}
Given 
$$
g=\left(
\begin{array}{cc}
a_{11} & a_{12} \\
a_{21} & a_{22} 
\end{array}
\right) \in \mathrm{SL}(2,\mathbb{R}),
$$
the $A$-projection of $g$ is
$$
\pi_A(g)=a_\lambda:=\left(
\begin{array}{cc}
\lambda & 0 \\
0 & \frac{1}{\lambda}
\end{array}
\right), \ \lambda=\frac{1}{\sqrt{a_{21}^2+a_{22}^2}}.
$$
\end{lem}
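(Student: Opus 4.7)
The plan is to mimic the proof of Lemma \ref{lemma N projection} almost verbatim. First I would consider the natural projection $p : G \to X$ onto the symmetric space of positive definite symmetric matrices of determinant one, defined by $p(g) = gg^T$. Since the orthogonal factor $k \in \mathrm{SO}(2)$ satisfies $kk^T = e$, the Iwasawa decomposition $g = nak$ gives
\[
gg^T = n a k k^T a^T n^T = n a^2 n^T,
\]
where we also used that $a$ is diagonal and hence equal to its own transpose. The point is that, although $n$ is a priori not determined by this expression alone, the $A$-component $a$ can be recovered from a single entry of $n a^2 n^T$.

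Next I would extract the $A$-part by an explicit $2 \times 2$ computation. Writing $a = a_\lambda$ and a generic upper unipotent $n \in N$ with off-diagonal entry $x \in \mathbb{R}$, a direct multiplication shows that the $(2,2)$-entry of $n a^2 n^T$ equals $1/\lambda^2$, independently of $x$. On the other hand, the $(2,2)$-entry of $gg^T$ is manifestly $a_{21}^2 + a_{22}^2$. Equating the two recovers $\lambda = 1/\sqrt{a_{21}^2 + a_{22}^2}$, as claimed, and positivity of $\lambda$ is built into the choice of $A$.

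There is essentially no obstacle: the argument is a routine $2 \times 2$ matrix computation parallel to Lemma \ref{lemma N projection}. The only new ingredient worth isolating is the observation that the $A$-factor can be read off from a single ``nice'' entry of $gg^T$, namely the $(2,2)$-one, on which the contribution of $N$ cancels out; this is what makes the formula so compact and avoids having to carry out the full Gram--Schmidt process that would be required to recover $n$ and $k$ explicitly.
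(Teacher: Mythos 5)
Your argument is correct and is exactly the strategy the paper itself invokes for this lemma (it simply refers back to the proof of Lemma \ref{lemma N projection}): project to the symmetric space via $g\mapsto gg^T$, use the Iwasawa decomposition to get $gg^T=na^2n^T$, and read off $\lambda$ from the $(2,2)$-entry, which equals $1/\lambda^2$ on one side and $a_{21}^2+a_{22}^2$ on the other. Nothing is missing.
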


We conclude by computing explicitly the differential $d^\rightarrow:C^{p,1} \rightarrow C^{p,2}$. The notion of genericity for two different pairs $(x_1,x_2)$ and $(y_1,y_2)$ requires that the $4$ points $x_1,x_2,y_1,y_2$ are all distinct. In this context, it is well-known that the triple $(x_1,x_2,y_1)$ must be either positively oriented or negatively
oriented. In the first case, in the $G$-orbit of the $4$-tuple $(x_1,x_2,y_1,y_2)$ there exists a unique representative of the form $(\infty,0,1,b)$, where $b$ 
is the \emph{cross ratio}. Analogously, in the negatively oriented case we can choose as representative of the $G$-orbit the $4$-tuple $(\infty,0,-1,b)$. 
Thus we can parametrize the $G$-orbits on a full measure subset of $(G/A)^2$ by the orientation and the cross ratio, namely $(\pm 1, b)$.

Finally, given an ordered pair $(x,y)$, we define the matrix
$$
g_{x,y}:=
\left(
\begin{array}{cc}
x & -\frac{y}{y-x} \\
1 & -\frac{1}{y-x}
\end{array}
\right) \in \mathrm{SL}(2,\mathbb{R}),
$$
which satisfies $g_{x,y}(\infty,0)=(x,y)$. It is worth noticing that $g_{x,y}$ is not uniquely defined: in fact we can multiply $g_{x,y}$ on the right by any element of the stabilizer $\mathrm{Stab}_G(\infty,0)=A$. 

\begin{lem}\label{lemma explicit differential A}
The differential
$$
d^\rightarrow:L^0(G^{p+1})^A \cong C^{p,1} \rightarrow C^{p,2}
$$
is given by 
\begin{equation}\label{eq diff A}
d^\rightarrow(\beta)(g_0,\ldots,g_p)(\pm 1,b)=(-1)^{p+1}[\beta(g_{\pm 1,b}^{-1}g_0,\ldots,g_{\pm 1,b}^{-1}g_p)-\beta(g_0,\ldots,g_p)].
\end{equation}
\end{lem}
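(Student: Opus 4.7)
The plan is to follow the proof of Lemma~\ref{lemma SL2 differential} almost verbatim, with $N$ replaced by $A$, the distinguished point $e_1 \in G/N$ replaced by $(\infty, 0) \in G/A$, the preferred orbit representative $(e_1, \lambda e_2)$ in $(G/N)^2$ replaced by the pair $((\infty, 0), (\pm 1, b))$ in $(G/A)^2$, and the matrix $\delta_\lambda$ replaced by $g_{\pm 1, b}$, whose defining property $g_{\pm 1, b}(\infty, 0) = (\pm 1, b)$ is the exact analogue of $\delta_\lambda e_1 = \lambda e_2$.

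First I would lift $\beta \in L^0(G^{p+1})^A$ to a $G$-invariant cochain $\overline{\beta} \in C^{p,1} \cong L^0(G^{p+1}, L^0(G/A))^G$ through the Eckmann--Shapiro induction
$$\overline{\beta}(g_0, \ldots, g_p)(h(\infty, 0)) := \beta(h^{-1}g_0, \ldots, h^{-1}g_p),$$
which is well-defined since $\mathrm{Stab}_G(\infty, 0) = A$ and $\beta$ is $A$-invariant. Identifying $G/A$ with the distinct ordered pairs in $\mathbb{P}^1(\mathbb{R})^2$ via $gA \mapsto (g\infty, g0)$, this formula rewrites as $\overline{\beta}(g_0, \ldots, g_p)(x, y) = \beta(g_{x,y}^{-1}g_0, \ldots, g_{x,y}^{-1}g_p)$.

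Next I would exploit the $G$-invariance of $d^\rightarrow \overline{\beta}$ to reduce the evaluation on a generic pair in $(G/A)^2$ to the evaluation on the canonical representative $((\infty, 0), (\pm 1, b))$ of its orbit. Applying the homogeneous coboundary in the $G/A$-variable weighted by the sign $(-1)^{p+1}$ prescribed by the bicomplex structure yields
$$d^\rightarrow \beta(g_0, \ldots, g_p)(\pm 1, b) = (-1)^{p+1}\bigl[\overline{\beta}(g_0, \ldots, g_p)(\pm 1, b) - \overline{\beta}(g_0, \ldots, g_p)(\infty, 0)\bigr],$$
and substituting the two values from the previous step, with $g_{\infty, 0}$ chosen to be the identity, reproduces Equation~\eqref{eq diff A}.

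The only point to verify, which is not really an obstacle, is that $g_{\pm 1, b}$ is only defined up to right multiplication by an element of $\mathrm{Stab}_G(\infty, 0) = A$. Replacing $g_{\pm 1, b}$ by $g_{\pm 1, b} a$ for some $a \in A$ multiplies each argument of $\beta$ on the left by $a^{-1}$, which is absorbed by the $A$-invariance of $\beta$; hence the right-hand side of Equation~\eqref{eq diff A} is independent of this choice, and the argument is complete.
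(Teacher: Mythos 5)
Your proposal is correct and follows the same route as the paper: induce $\overline{\beta}$ via Eckmann--Shapiro, use $G$-invariance to evaluate on the orbit representative $((\infty,0),(\pm 1,b))$, apply the signed homogeneous coboundary, and note that the $A$-invariance of $\beta$ absorbs the ambiguity in the choice of $g_{\pm 1,b}$. The paper's proof (stated as "completely analogous" to Lemma~\ref{lemma SL2 differential}) is exactly this argument, with the well-definedness point handled in the remark preceding it.
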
 

Notice that the right-hande side of Equation \eqref{eq diff A} is well-defined thanks to the $A$-invariance of the cocycle $\beta$. In fact $g_{\pm 1,b}$ is defined up to right multiplication by an element in $A$. As a consequence, its inverse is defined up to left $A$-multiplication, but the $A$-invariance of $\beta$ allows to get rid of this issue. 

\begin{proof}
The proof is completely analogous to the one of Lemma \ref{lemma SL2 differential}. Take a cochain $\beta \in L^0(G^{p+1})^A$. In virtue of the Eckmann-Shapiro induction map, we can define $\overline{\beta} \in C^{p,1} = L^0(G^{p+1},L^0(G/A))^G$ as follows
$$
\overline{\beta}(g_0,\ldots,g_p)(h(\infty,0)):=\beta(h^{-1}g_0,\ldots,h^{-1}g_p). 
$$ 
We now consider a pair of ordered pairs $((\infty,0),(\pm 1,b))$, where we have the right to take those three initial points thanks to the $G$-transitivity. The right differential is given by 
$$
d^\rightarrow(g_0,\ldots,g_p)(\pm 1,b):=d^\rightarrow \overline{\beta}(g_0,\ldots,g_p)((\infty,0),(\pm 1,b)).
$$
Since $g_{\pm 1,b}(\infty,0)=(\pm 1,b)$, the definition of $d^\rightarrow$ leads to 
\begin{align*}
d^\rightarrow \overline{\beta}(g_0,\ldots,g_p)((\infty,0),(\pm 1,b))=&(-1)^{p+1}[\overline{\beta}(g_0,\ldots,g_p)(\pm 1,b)  - \overline{\beta}(g_0,\ldots,g_p)(\infty,0)]\\
=&(-1)^{p+1}[\beta(g_{\pm 1,b}^{-1}g_0,\ldots,g_{\pm 1,b}^{-1}g_p)-\beta(g_0,\ldots,g_p)].
\end{align*}
\end{proof}

\subsection{Proof of Theorem \ref{Thm SL2 A}} Let $\alpha_{\mathfrak{a}} \in \mathfrak{a}^*$ be a linear functional. We can define 
$$
\overline{\alpha}:A \rightarrow \mathbb{R}, \ \overline{\alpha}(a):=\alpha_{\mathfrak{a}}(\log a),
$$
and from it we can define the homogeneized cocycle 
$$
\alpha:A^2 \rightarrow \mathbb{R}, \ \alpha(a_0,a_1):=\overline{\alpha}(a_1)-\overline{\alpha}(a_0). 
$$
We finally extend $\alpha$ to a $A$-invariant cocycle $\alpha_G:G^2 \rightarrow \mathbb{R}$ by precomposing with the $A$-projection of Lemma \ref{lemma A projection}, namely
$$
\alpha_G(g_0,g_1):=\alpha(\pi_A(g_0),\pi_A(g_1)).
$$
In this way we obtain an element $\alpha_G \in L^0(G^2)^A \cong C^{1,1}$ whose cohomology class in $H^1(C^{1,1},d^\uparrow) \cong H^1_m(A) \cong \mathfrak{a}^*$ is exactly the homomorphism $\alpha_\mathfrak{a}$. 

We need to follow $\alpha_G$ along the diagram below: 
\begin{equation}\label{equation snake A} 
\xymatrix{
\alpha_G \in C^{1,1} \ar[rr] && d^\rightarrow\alpha_G=d^\uparrow \beta \in C^{1,2}  && \\
&& \beta \in C^{0,2} \ar[rr] \ar[u] && d^\rightarrow \beta=:\omega \in C^{0,3} \\
}
\end{equation}

\subsubsection*{Computation of $\beta$} By Lemma \ref{lemma explicit differential A} we have that 
$$
d^\rightarrow \alpha_G(g_0,g_1)(\pm 1, b)=\alpha_G(g^{-1}_{\pm 1,b}g_0,g^{-1}_{\pm 1,b}g_1)-\alpha_G(g_0,g_1).
$$
If we set
$$
\beta(g)(\pm 1,b):=\alpha_G(g_{\pm 1,b}^{-1},g_{\pm 1,b}^{-1}g)-\alpha_G(e,g). 
$$
we get that $d^\uparrow \beta=d^\rightarrow \alpha_G$, as desired. 

Now we would like to obtain an expression of image of $\beta$ through induction when $g=e$ and for any pair of generic pairs $((x_1,x_2),(y_1,y_2))$. In order to do that, we will distinguish two possible cases, since we may have that the triple $(x_1,x_2,y_1)$ is either positively oriented or negatively oriented. Assume that the triple $(x_1,x_2,y_1)$ is positively oriented. If we set 
$$
c=\sqrt{\frac{y_1-x_2}{(x_2-x_1)(y_1-x_1)}}
$$
and we define
\begin{equation}\label{equation gxyz}
g_{x_1,x_2,y_1}:=
\left(
\begin{array}{cc}
cx_1 & -\frac{x_2}{c(x_2-x_1)}\\
c & -\frac{1}{c(x_2-x_1)} 
\end{array}
\right) \in \mathrm{SL}(2,\mathbb{R}),
\end{equation}
then we have $g_{x_1,x_2,y_1}(\infty,0,1,b)=(x_1,x_2,y_1,y_2)$. Here $b=[x_1,x_2,y_1,y_2]$ is the cross ratio. As a consequence, we can write
\begin{align}\label{beta positive}
\beta(e)((x_1,x_2),(y_1,y_2))&=\beta(e)(g_{x_1,x_2,y_1}((\infty,0),(1,b)))\\
&=\beta(g_{x_1,x_2,y_1}^{-1})((\infty,0),(1,b)) \nonumber \\
&=\alpha_G(g_{1,b}^{-1},g_{1,b}^{-1}g_{x_1,x_2,y_1}^{-1})-\alpha_G(e,g_{x_1,x_2,y_1}^{-1})  \nonumber \\
&=\frac{1}{2}\alpha_{\mathfrak{a}}\left(\log \left(\frac{2(x_1^2 + 1)(x_2 - y_1)^2}{(y_1^2 + 1)(x_1 - x_2)^2}\right) \right), \nonumber 
\end{align}
where we exploited the definition of $\alpha_G$ and the projection given in Lemma \ref{lemma A projection} to move from the third line to the last one. 

We now move to the case where $(x_1,x_2,y_1)$ is negatively oriented. This time we need to set
$$
c=\sqrt{\frac{x_2-y_1}{(x_1-x_2)(x_1-y_1)}}
$$
and we define $g_{x_1,x_2,y_1}$ as in Equation \eqref{equation gxyz}. We have that $g_{x_1,x_2,y_1}(\infty,0,-1,b)=(x_1,x_2,y_1,y_2)$ and we can write
\begin{align}\label{beta negative}
\beta(e)((x_1,x_2),(y_1,y_2))&=\beta(e)(g_{x_1,x_2,y_1}((\infty,0),(-1,b)))\\
&=\beta(g_{x_1,x_2,y_1}^{-1})((\infty,0),(-1,b)) \nonumber \\
&=\alpha_G(g_{-1,b}^{-1},g_{-1,b}^{-1}g_{x_1,x_2,y_1}^{-1})-\alpha_G(e,g_{x_1,x_2,y_1}^{-1})  \nonumber \\
&=\frac{1}{2}\alpha_{\mathfrak{a}}\left(\log \left(\frac{2(x_1^2 + 1)(x_2 - y_1)^2}{(y_1^2 + 1)(x_1 - x_2)^2}\right) \right) \nonumber 
\end{align}
Since the evaluations of $\beta$ in both Equation \eqref{beta positive} and Equation \eqref{beta negative} are the same, we have that $\beta(e)((x_1,x_2),(y_1,y_2))$ is given by Equation \eqref{beta positive} for any pair of generic pairs. 

\subsubsection*{Computation of $\omega$} Take any triple of pairwise generic pairs $((x_1,x_2),(y_1,y_2),(z_1,z_2))$, that means that the six points in the boundary are all distinct. The evaluation of $\omega$ on the neutral element and the chosen triple is given by
\begin{align*}
\omega(e)((x_1,x_2),(y_1,y_2),(z_1,z_2))&=d^\rightarrow \beta(e)((x_1,x_2),(y_1,y_2),(z_1,z_2))\\
&=-\beta(e)((y_1,y_2),(z_1,z_2))+\beta(e)((x_1,x_2),(z_1,z_2))-\beta(e)((x_1,x_2),(y_1,y_2))\\
&=-\frac{1}{2}\alpha_\mathfrak{a}\left(\log\left(\frac{(z_1-y_2)(y_1-x_2)}{(z_1-x_2)(y_2-y_1)}\right)\right)\\
&=-\frac{1}{2}\alpha_\mathfrak{a}(\log(|[x_2,y_1,y_2,z_1]-1|),
\end{align*}
which finishes the proof.

\bibliographystyle{alpha}

\end{document}